\newcommand\Defn[1]{\textbf{#1}}
\renewcommand\emptyset{\varnothing}
\newcommand\Z{\mathbb{Z}}               
\newcommand\R{\mathbb{R}}
\newcommand\eps{\varepsilon}
\newcommand\Sphere{\mathbf{S}}%
\newcommand\Av{\mathcal{A}} %
\newcommand\Avf[1]{\Av(#1)}
\newcommand\Rain{\mathcal{K}}
\newcommand\betti{\tilde{\beta}}
\newcommand\cS{\mathcal{S}}
\newcommand\n{\mathbf{n}}
\newcommand\Oct{\mathsf{O}}
\newcommand\Zil{\mathsf{Z}}
\newcommand\join{\ast}
\newcommand\s{s}%
\newcommand\f{C}
\newcommand\D{\mathcal{V}}
\newcommand\U{\mathcal{U}}
\DeclareMathOperator*{\csd}{depth}%
\DeclareMathOperator*{\Symm}{\triangle}
\DeclareMathOperator*{\relint}{relint}
\DeclareMathOperator*{\Int}{int}
\DeclareMathOperator*{\aff}{aff}
\DeclareMathOperator*{\conv}{conv}
\DeclareMathOperator*{\im}{im}
\newtheorem{thm}{Theorem}[section]
\newtheorem{cor}[thm]{Corollary}
\newtheorem{lem}[thm]{Lemma}
\newtheorem{prop}[thm]{Proposition}
\newtheorem{conj}{Conjecture}
\newtheorem*{Conj}{Conjecture}
\newtheorem{quest}{Question}
\theoremstyle{definition}
\newtheorem{example}[thm]{Example}
\title{Colorful simplicial depth, Minkowski sums, and generalized Gale
transforms}
\author[Adiprasito]{Karim Adiprasito}
\address{Einstein Institute for Mathematics, %
Hebrew University of Jerusalem, %
Jerusalem, Israel}
\email{adiprasito@math.huji.ac.il}
\author[Brinkmann]{Philip Brinkmann}
\address{Fachbereich Mathematik und Informatik, %
Freie Universit\"at Berlin, Berlin, %
Germany}
\email{pbrinkmann@math.fu-berlin.de}
\author[Padrol]{Arnau Padrol}
\address{
Sorbonne Universit\'es, Universit\'e Pierre et Marie Curie (Paris 6), Institut de Math\'ematiques de Jussieu - Paris Rive Gauche (UMR 7586), Paris, France}
\email{arnau.padrol@imj-prg.fr}
\author[Pat\'ak]{Pavel Pat\'ak}
\address{Einstein Institute for Mathematics, %
Hebrew University of Jerusalem, %
Jerusalem, Israel}
\email{ppatak@seznam.cz}
\author[Pat\'akov\'a]{Zuzana Pat\'akov\'a}
\address{Einstein Institute for Mathematics, %
Hebrew University of Jerusalem, %
Jerusalem, Israel}
\email{zuzka@kam.mff.cuni.cz}
\author[Sanyal]{Raman Sanyal}
\address{Fachbereich Mathematik und Informatik, %
Freie Universit\"at Berlin, Berlin, %
Germany}
\email{sanyal@math.fu-berlin.de}
\keywords{Colorful simplicial depth, Colorful point configuration, Minkowski sum of polytopes, Gale duality, Cayley trick}
\subjclass[2010]{%
52C45, %
52A35, %
05E45, %
52Bxx, %
52B35} %
\date{\today}
\begin{document}

\begin{abstract}
    The colorful simplicial depth of a collection of $d+1$ finite sets of
    points  in Euclidean $d$-space is the number of choices of a point from
    each set such that the origin is contained in their convex hull.  We use
    methods from combinatorial topology to prove a tight upper bound on the
    colorful simplicial depth. This implies a conjecture of Deza \emph{et
    al.}\ (2006).  Furthermore, we introduce colorful Gale transforms as a
    bridge between colorful configurations and Minkowski sums. Our colorful
    upper bound then yields a tight upper bound on the number of totally mixed
    facets of certain Minkowski sums of simplices. This resolves a conjecture
    of Burton (2003) in the theory of normal surfaces.
\end{abstract}

\maketitle

\section{Introduction}\label{sec:intro}

The classical Carath\'eodory theorem states that for any point $p$ in the
convex hull of a finite set $V \subset \R^d$, there is a subset $S \subseteq
V$ of at most $d+1$ points with $p \in \conv(S)$. Of course, this is is
invariant under translation and for all practical purposes one can always
assume $p = 0$.  In 1982, Imre B\'ar\'any~\cite{bar82} gave a beautiful
\emph{colored} version of this result.

\begin{thm}[Colorful Carath\'eodory Theorem]\label{thm:CC}
    Let $C_0,\dots,C_d \subset \R^d$ be finite sets such that $0$ is in the
    relative interior of $\conv(C_i)$ for all $0 \le i \le d$. Then there is
    some $S \subseteq C_0 \cup \cdots \cup C_d$ with $|S \cap C_i| = 1$ for
    all $i$ such that $0 \in \conv(S)$.
\end{thm}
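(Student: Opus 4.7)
The plan is to follow B\'ar\'any's original minimization argument. I would consider the finite family of all colorful transversals $S \subseteq C_0 \cup \cdots \cup C_d$ (that is, sets with $|S \cap C_i| = 1$ for every $i$), and for each such $S$ let $p(S)$ denote the point of $\conv(S)$ closest to the origin in Euclidean distance. Since the family is finite, I can select some $S$ minimizing $\|p(S)\|$. If $p(S) = 0$ the conclusion holds, so suppose towards a contradiction that $p := p(S) \neq 0$.

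Let $H$ be the hyperplane through $p$ orthogonal to the segment from $0$ to $p$. Then $\conv(S)$ is contained in the closed halfspace bounded by $H$ that does not contain $0$, and $p$ lies in the relative interior of some proper face of $\conv(S)$, namely the convex hull of a proper subset $T \subsetneq S$. Hence there is at least one color $i$ whose representative $v_i \in S \cap C_i$ does not occur in $T$. This is the point where the relative-interior hypothesis enters: since $0 \in \relint(\conv(C_i))$ and $0 \notin H$, the polytope $\conv(C_i)$ cannot lie entirely in the closed halfspace of $H$ opposite to $0$, so I can pick $v_i' \in C_i$ strictly on the same side of $H$ as the origin. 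Replacing $v_i$ by $v_i'$ yields a new colorful transversal $S' = (S \setminus \{v_i\}) \cup \{v_i'\}$ that still contains $T$; an infinitesimal displacement of $p$ toward $v_i'$ then stays inside $\conv(T \cup \{v_i'\}) \subseteq \conv(S')$ and lies strictly closer to $0$ than $p$ does, contradicting the minimality of $\|p(S)\|$.

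The subtle step — and where I expect the main obstacle to lie — is ensuring the \emph{strict} decrease in distance, not just a weak one. This requires $v_i'$ to be in the open halfspace containing $0$, not merely on $H$; merely assuming $0 \in \conv(C_i)$ would leave open the degenerate case in which all of $C_i$ sits on $H$, and then the swap would produce no improvement at all. The assumption $0 \in \relint(\conv(C_i))$ combined with $p \neq 0$ is exactly what excludes this. The auxiliary facts used — that the nearest point of a polytope to an exterior point lies in the relative interior of a proper face, and that hyperplane separation applies to the closest-point projection — are standard, so the heart of the argument is packaging these together with the relative-interior hypothesis in the swap step.
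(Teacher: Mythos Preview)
The paper does not prove Theorem~\ref{thm:CC}; it is quoted from B\'ar\'any~\cite{bar82} as background. Your argument is B\'ar\'any's original minimization proof and is correct, with one caveat: when the $d+1$ points of $S$ are affinely dependent, $\conv(S)$ is not full-dimensional and $p$ may lie in $\relint\conv(S)$, so the ``proper face'' claim can fail as stated. The standard repair is to apply Carath\'eodory inside the supporting hyperplane $H$: since $p\in\conv(S)\cap H=\conv(S\cap H)$ and $\dim H=d-1$, you may take $T\subseteq S$ with $|T|\le d$, and a color is still free.

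Your diagnosis of where the $\relint$ hypothesis is essential is slightly off, however. Even under the weaker assumption $0\in\conv(C_i)$, writing $0=\sum_j\lambda_j c_j$ gives $\sum_j\lambda_j\langle p,c_j\rangle=0$, so some $c_j$ satisfies $\langle p,c_j\rangle\le 0<\|p\|^2$ and therefore lies \emph{strictly} on the origin's side of $H$; the swap still produces a strict decrease. B\'ar\'any's theorem in fact holds with the weaker hypothesis, and the paper formulates it with $\relint$ only because that is the standing assumption (\emph{centered} configurations) used in the rest of the article.
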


The sets $C_i$ are thought of as color classes and $C = \{ C_0,\dots, C_d \}$
is called a \Defn{colorful configuration}. We call $C$ \Defn{centered} if $0
\in \relint \conv(C_i)$ for all $0 \le i \le d$. The assumption that $C$ is
centered can be considerably weakened (see, for
example,~\cite{ArochaBaranyBrachoFabilaMontejano2009, bar82,
HolmsenPachTverberg2008, MeunierDeza2013}) but in this paper we will only
consider centered configurations; see Question~\ref{quest:weaker}. We call a
subset $S \subseteq \bigcup_i C_i$ a \Defn{colorful simplex} if $|S \cap C_i|
\le 1$ for all $i$ and we call the simplex $S$ \Defn{hitting} if $\dim S = |S|
- 1 = d$ and $0 \in \conv(S)$.

Inspired by the simplicial depth introduced in~\cite{Liu}, Deza, Huang,
Stephen, Terlaky~\cite{DezaEtal2006} introduced the \Defn{colorful simplicial
depth}, $\csd(C)$, of a colorful configuration $C$ as the number of hitting
simplices of $C$. In particular, they initiated a systematic study of the
extremal values of $\csd(C)$ as $C$ ranges over all colorful configurations in
$\R^d$. The Colorful Carath\'eodory Theorem asserts that for any centered
configuration, there is at least one hitting simplex. For lower bounds on the
colorful simplicial depth, Deza \emph{et al.} confine themselves to the case
of colorful configurations in $\R^d$ such that the \Defn{core} $\bigcap_i
\conv(C_i)$ is of full-dimension $d$ and contains the origin in its interior. 
In particular, they conjectured that if
$|C_i| = d+1$, then 
\[
    1 + d^2 \ \le \ \csd(C).
\]
The initial lower bound of $2d$ from~\cite{DezaEtal2006} was improved in a
series of papers~\cite{BaranyMatousek2007,DezaMeunierSarrabezolles2014,
DezaStephenXie2011, StephenTomas2008} culminating in the resolution of the
conjectured lower bound by Sarrabezolles~\cite{Sarrabezolles2015}.
In~\cite{DezaEtal2006} also a conjectured upper bound was proposed.

\begin{conj}[{\cite[Conj.~4.4.]{DezaEtal2006}}]
    Let $C = \{ C_0, \dots, C_d \}$ be a centered colorful configuration in
    $\R^d$ with $|C_i| = d+1$ for all $0 \le i \le d$ and $0$ in the interior 
    of the core. Then
    \[
        \csd(C) \ \le \ 1 + d^{d+1}.
    \]
\end{conj}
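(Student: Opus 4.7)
The approach I would take is to translate the problem into a count on Minkowski sums of simplices via the colorful Gale transform announced in the abstract, and then to bound this count by a combinatorial-topological argument on the dual side.

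First, note that $|C_i|=d+1$ together with $0\in\relint\conv(C_i)$ forces each $\conv(C_i)$ to be a full-dimensional simplex containing the origin in its interior, so the unique (up to scaling) positive linear dependence among the points of $C_i$ is non-degenerate. Applying the colorful Gale transform, the configuration $C$ should correspond to a Minkowski sum $P=\Delta_0+\cdots+\Delta_d$ of $d+1$ standard $d$-simplices, together with a distinguished point encoding the centeredness hypothesis. Unwinding the Cayley trick, a colorful simplex $S\subseteq C_0\cup\cdots\cup C_d$ is then hitting if and only if the associated cell of the induced mixed subdivision of $P$ is a totally mixed facet containing this distinguished point.

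With this translation in hand, it remains to bound by $1+d^{d+1}$ the number of such totally mixed facets. The total number of vertices of $P$ is at most $(d+1)^{d+1}$, indexed by a choice of one vertex from each summand; these are the ``candidates'' for hitting simplices. The plan is to partition the non-totally-mixed candidates according to which summand contributes only a lower-dimensional face to the corresponding mixed cell, and then to combine a coloured inclusion-exclusion with a topological argument on the complex of non-hitting colorful simplices to sharpen the enumeration down to $1+d^{d+1}$.

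The main obstacle I anticipate is this final estimate. A crude enumeration only yields $\csd(C)\le (d+1)^{d+1}$, so tightening to $1+d^{d+1}$ requires extracting a nontrivial identity from the centered hypothesis --- in effect, that the complement of the hitting simplices is \emph{topologically large enough} to absorb at least $(d+1)^{d+1}-d^{d+1}-1$ potential hits. Locating the right topological invariant (likely an Euler-characteristic or Alexander-duality computation on the boundary of the non-hitting subcomplex, run on the Gale side where the Minkowski structure makes the extremal configurations transparent) is, in my view, where the real work lies; the Cayley-trick translation above is mostly a bookkeeping device that makes this invariant available.
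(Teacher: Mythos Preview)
Your proposal reverses the logical direction of the paper and, more seriously, does not supply the key estimate.

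In the paper the colorful Gale transform is used the \emph{opposite} way: the upper bound on $\csd(C)$ is established first, by a direct combinatorial-topological argument on the colorful side, and only afterwards is it exported via Cayley/Gale duality to yield the bound $1+n_0\cdots n_s$ on totally mixed facets of a Minkowski sum of simplices. There is no independent count on the Minkowski side to import; your translation, even if made precise, would land you back on the very statement you set out to prove. (Your description of the dictionary is also garbled: hitting simplices correspond to totally mixed facets themselves, not to ``totally mixed facets containing a distinguished point'' of some mixed subdivision.)

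The substantive gap is your final paragraph, which openly defers the ``real work''. Here is what that work actually is. One forms the \emph{avoiding complex} $\Av=\Avf{C}\subset\Rain$ of colorful simplices whose convex hull misses $0$; a short Euler-characteristic comparison with $\Rain$ gives
\[
\csd(C)\;=\;\prod_{i=0}^d(|C_i|-1)\;+\;\betti_{d-1}(\Av)\;-\;\betti_d(\Av),
\]
so the whole problem reduces to showing $\betti_{d-1}(\Av)=1$ for every centered configuration in relative general position. This is not obtained by inclusion--exclusion or Alexander duality. The paper introduces \emph{flips}---continuous deformations of the configuration in which the origin crosses exactly one colorful $(d{-}1)$-simplex---proves that $\betti_{d-1}(\Av)$ is invariant under a flip, shows that flips connect any two centered configurations in relative general position, and finally verifies $\betti_{d-1}(\Av)=1$ on a single explicit example by an elementary collapsing argument. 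None of these ingredients is present in your outline; the Euler-characteristic intuition you mention is only the easy Lemma that sets up the problem, not its solution.
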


\subsection{Upper Bounds on colorful simplicial depth}
The first goal of this paper is to prove this conjecture in the following
stronger form.  We say that a colorful configuration $C$ in $\R^d$ is in
\Defn{relative general position} if no colorful simplex $S$ of $C$ of
dimension $d-1$ contains the origin in its convex hull.
\begin{thm}\label{thm:CC_upper}
    Let $C = \{ C_0,\dots,C_d \}$ be a centered colorful configuration in
    relative general position in $\R^d$ with $|C_i| \ge 2$ for all $0 \le i
    \le d$.  Then
    \[
        \csd(C) \ \le \ 1 + \prod_{i=0}^d (|C_i| - 1).
    \]
\end{thm}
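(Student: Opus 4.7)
My approach would be via combinatorial topology, centered on the join complex $K := C_0 \ast C_1 \ast \cdots \ast C_d$ of the color classes viewed as $0$-dimensional simplicial complexes. The top-dimensional simplices of $K$ are exactly the colorful $d$-simplices of $C$, and iterating the Künneth formula for joins shows that $K$ is homotopy equivalent to a wedge of $N := \prod_{i=0}^d (|C_i| - 1)$ copies of $S^d$. The appearance of exactly the number $N$ from the desired upper bound is the first sign that $K$ captures the relevant combinatorics.

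The linear extension of the inclusions $C_i \hookrightarrow \mathbb{R}^d$ defines a PL map $f \colon |K| \to \mathbb{R}^d$ whose restriction to each colorful $d$-simplex is the affine realization of that simplex. Relative general position guarantees that $f$ is transverse to $0 \in \mathbb{R}^d$, so $f^{-1}(0)$ consists of one interior point per hitting simplex and $\csd(C) = |f^{-1}(0)|$. One-point compactifying the target yields $\bar f \colon |K| \to S^d$, whose homotopy class is encoded by degrees $k_1, \ldots, k_N$ on the $N$ wedge summands. The centeredness hypothesis $0 \in \relint \conv(C_i)$ for each $i$ translates into nontriviality statements for the restrictions of $\bar f$ to sub-joins of the form $\ast_{j \neq i} C_j$ (each a copy of $S^{d-1}$ up to homotopy), and this should constrain how many hitting simplices each wedge summand can produce without cancellation.

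Combining the topological data with the Colorful Carath\'eodory theorem, I would argue that one hitting simplex accounts for a ``base'' Carath\'eodory contribution while the remaining $\csd(C) - 1$ hitting simplices inject into a basis of $\tilde H_d(K) \cong \mathbb{Z}^N$, giving $|f^{-1}(0)| \leq N + 1$. The main obstacle is precisely this injectivity step, since degree-theoretic methods typically yield only lower bounds on preimage counts and so an additional rigidity input is needed. My plan to circumvent this is a shelling of $K$: processing hitting simplices in a chosen shelling order, each new hitting simplex should ``close off'' a top-dimensional cycle, bounding the count by $\mathrm{rank}\,\tilde H_d(K) + 1$. The base case $|C_i| = 2$ for all $i$ (where $K \cong S^d$, $N = 1$, and centeredness forces each $C_i = \{a_i, -\lambda_i a_i\}$ to lie on a line through the origin) requires a separate elementary verification that $\csd(C) \leq 2$, using the antipodal structure of $K$ to pair up any two hitting simplices.
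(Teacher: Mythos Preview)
Your setup is exactly right and matches the paper's: the join $K = C_0 \ast \cdots \ast C_d$ is homotopy equivalent to $\bigvee_N S^d$ with $N = \prod_i(|C_i|-1)$, and the map $f\colon |K|\to\R^d$ is the right object to study. But the gap you flag as ``the main obstacle'' is real, and the proposed workaround does not close it. Degree theory and the homotopy class of $\bar f$ only bound $|f^{-1}(0)|$ from below; there is no mechanism by which the $\csd(C)-1$ ``extra'' hitting simplices would inject into $\tilde H_d(K)$, since hitting simplices are not $d$-cycles. If instead you pass to the pair $(K,\Av)$, where $\Av$ is the \emph{avoiding complex} of simplices whose affine span misses~$0$, the long exact sequence gives
\[
  \csd(C) \;=\; N \;-\; \tilde\beta_d(\Av) \;+\; \tilde\beta_{d-1}(\Av),
\]
so the entire content of the upper bound is the single equality $\tilde\beta_{d-1}(\Av)=1$. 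Your proposal never isolates this quantity.

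The shelling idea cannot supply the missing step: any shelling of $K$ is a purely combinatorial ordering of \emph{all} $d$-simplices, independent of which ones happen to be hitting for the particular configuration $f$, so it cannot exploit centeredness or relative general position. The paper's route is genuinely different: it proves $\tilde\beta_{d-1}(\Av)=1$ by introducing \emph{flips} between centered configurations in relative general position (continuous deformations in which the origin passes through exactly one colorful $(d-1)$-simplex), showing that $\tilde\beta_{d-1}(\Av)$ is invariant under flips, proving that any two such configurations are flip-connected, and finally computing $\tilde\beta_{d-1}(\Av)=1$ directly for the explicit extremal configuration via a collapse onto the boundary of a single hitting simplex. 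This deformation/invariance argument is the key idea absent from your proposal.
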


The \emph{relative general position} assumption is natural as otherwise for a
colorful configuration with $C_1 = \{0\}$, we would have the trivial upper
bound of $\prod_i |C_i|$.  
In general, even if the origin is contained in the interior of the 
full-dimensional core, adding several copies of $0$ to the color classes
produces configurations with a much larger colorful simplicial depth than the expected bound.
This behavior can happen even if all classes avoid the origin; for example, by adding
several copies of a point $p$ to $C_0$ and several of $-p$ to $C_1$.
To avoid these artifacts that arise from non-generic configurations, one could alternatively require hitting simplices to contain the origin in its
interior. The upper bound is then also attained at colorful configurations in
relative general position and our result applies.  The requirement that $|C_i|
\ge 2$ is subsumed by centered and relative general position and is therefore
redundant.

Let us first note that the bound of Theorem~\ref{thm:CC_upper} is tight.

\begin{figure}[htpb]
\includegraphics[width=.45\linewidth]{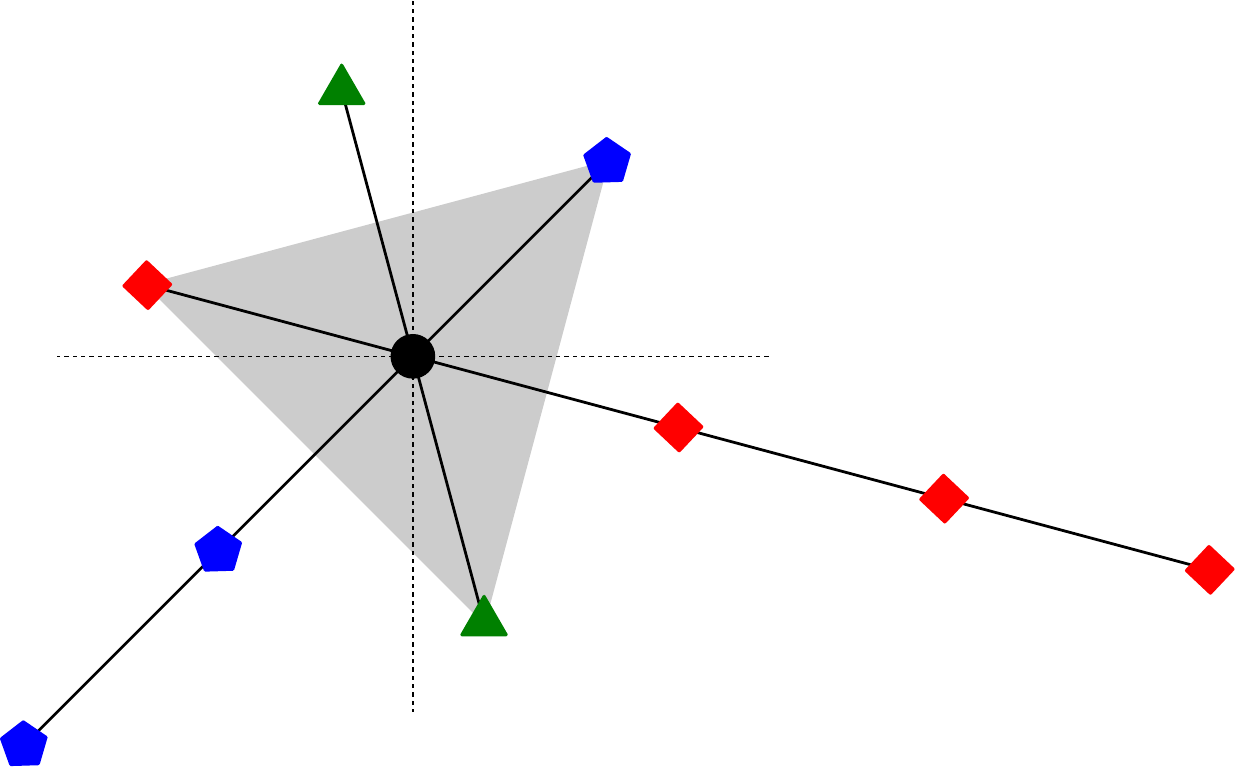}
\caption{Configuration from Example~\ref{ex:bound} for $\n=(2,3,4)$.}
\end{figure}

\begin{example}\label{ex:bound}
    Let $v_0,\dots,v_d \in \R^d$ be the vertices of a simplex containing the
    origin in its interior and for $n_0,\dots,n_d \ge 2$ define
    \[
        C_i \ := \ \{ v_i, -v_i, -2v_i,\dots,-(n_i-1)v_i \}
    \]
    for $0 \le i \le d$. Then $C = \{ C_0,\dots, C_d \}$ is a centered
    colorful configuration. The vertices of any colorful $d$-simplex $S$ of
    $C$ are minimally affinely dependent and hence the coefficients of this
    dependence are unique up to scaling. It follows that the coefficients are
    all of the same sign if and only if $S = \{v_0,\dots,v_d\}$ or $S =
    \{-\alpha_0v_0,\dots,-\alpha_d v_d \}$ for some $1 \le \alpha_i < n_i$.
    This yields exactly $1 + (n_0 - 1) \cdots (n_d-1)$ hitting simplices.
\end{example}

We will prove Theorem~\ref{thm:CC_upper} in Section~\ref{sec:upper} by means
of combinatorial topology. More precisely, we show that for centered colorful
configurations in relative general position, the number of hitting simplices
is related to the reduced Betti numbers of an associated simplicial complex,
the \emph{avoiding complex}. In particular, Theorem~\ref{thm:CC_upper} follows
from a result that shows that a certain Betti number is independent of the
configuration. To show this, we introduce the notion of \emph{flips} between
colorful configurations, and we prove our result by `flipping' any
configuration to the configuration given in Example~\ref{ex:bound}.

\subsection{Colorful Gale transforms of Minkowski sums}
The second goal of this paper is to highlight a connection between colorful
configurations and faces of Minkowski sums. The  \Defn{Minkowski sum}
of convex polytopes $P_0,\dots,P_\s \subset \R^d$ is the polytope
\[
    P \ = \ P_0 + \cdots + P_\s \ = \ \{ p_0 + \cdots + p_\s : p_i \in P_i
    \text{ for } 0 \le i \le \s \}
\]
and this operation is key to many deep results in many areas, notably convex
geometry~\cite{SchneiderNew} and computational commutative algebra
(eg.~\cite{GS}). The combinatorial complexity of Minkowski sums of polytopes
has been subject of several studies~\cite{AS2016,MatschkePfeiflePilaud2011,RS12}.

Using Gale transforms and Cayley embeddings, we define in
Section~\ref{sec:cayley} \emph{colorful Gale transforms} associated to a
collection $P_0,\dots,P_\s$ that, similar to ordinary Gale transforms, capture
the facial structure of Minkowski sums in the combinatorics of colorful
configurations. 

A particularly interesting case is that of Minkowski sums of simplices. A face
$F$ of the Minkowski sum $P$ is of the form $F = F_0 + \cdots + F_\s$ for
faces $F_i \subseteq P_i$. A face is \Defn{mixed} if $\dim F_i > 0$ for all
$i$ and \Defn{totally mixed} if each $F_i \subset P_i$ is an inclusion-maximal
face, that is, $F_i$ is a facet of $P_i$.  If each $P_i$ is a simplex, then we
show in Lemma~\ref{lem:char_tmf} that totally mixed facets are exactly the
hitting simplices containing $0$ in their interior of the associated colorful
Gale transform.  In particular, this allows us to prove the following. 

\begin{thm}\label{thm:simplices}
    For $n_0,\dots, n_\s \ge 1$, set $n = n_0 + \cdots + n_\s$ and let
    $P_i\subset \R^{n - \s - 1}$ for $0\leq i\leq \s$ be $n_i$-dimensional
    simplices whose Minkowski sum is full-dimensional.  Then the number of
    totally mixed facets of $P_0+\cdots+P_\s$ is at most
    \[
        1 + n_0 n_1\cdots n_\s.
    \]
\end{thm}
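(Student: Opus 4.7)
The plan is to translate the enumeration problem into a colorful simplicial depth question via the colorful Gale transform constructed in Section~\ref{sec:cayley}, and then invoke Theorem~\ref{thm:CC_upper}. From the tuple $(P_0,\dots,P_\s)$ I obtain a colorful configuration $C = \{C_0,\dots,C_\s\}$ in $\R^\s$ in which color class $C_i$ has exactly $|C_i| = n_i + 1$ points, one per vertex of $P_i$. By Lemma~\ref{lem:char_tmf}, the totally mixed facets of $P_0+\cdots+P_\s$ are in bijection with those hitting simplices of $C$ that contain the origin in their interior, which in particular form a subset of all hitting simplices; hence the number of totally mixed facets of $P$ is bounded above by $\csd(C)$.

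If $C$ is centered and in relative general position, then Theorem~\ref{thm:CC_upper} immediately gives
\[
  \csd(C) \ \le \ 1 + \prod_{i=0}^{\s}(|C_i|-1) \ = \ 1 + n_0 n_1\cdots n_\s,
\]
which is exactly the claim. Centeredness should follow because each $P_i$ is a non-degenerate $n_i$-simplex whose full vertex set persists as vertices of the Cayley polytope, so the Gale-duality construction underlying the colorful Gale transform is set up to produce $0 \in \relint\conv(C_i)$ for every $i$; this step should be a direct verification from the definitions of Section~\ref{sec:cayley} together with the assumed full-dimensionality of the Minkowski sum.

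The main obstacle is the relative general position hypothesis, which need not hold for arbitrary simplices. I would address this by a perturbation argument: arbitrarily small perturbations of the vertices of each $P_i$ preserve full-dimensionality of the Minkowski sum (an open condition) and, for generic perturbations, push the associated colorful Gale transform into relative general position. Moreover, the number of totally mixed facets is lower semi-continuous under such perturbations — in a degenerate configuration, several generic totally mixed facets can collapse into a common lower-dimensional face, but no new ones can appear out of a limit — so the count for the original simplices is bounded above by the count after perturbing. Applying Theorem~\ref{thm:CC_upper} to the perturbed configuration then yields the bound for the original. Verifying this lower semi-continuity carefully, and pinning down the precise genericity condition on the $P_i$ that forces relative general position of the colorful Gale transform, is the technically delicate part of the proof.
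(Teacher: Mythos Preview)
Your approach is correct and essentially identical to the paper's: pass to the colorful Gale transform, invoke Lemma~\ref{lem:char_tmf}, handle degeneracy by perturbation, and apply Theorem~\ref{thm:CC_upper}. The only difference is that the paper perturbs the colorful configuration $G$ directly rather than the simplices $P_i$: since ``$0$ lies in the interior of a colorful simplex'' is an open condition on $G$ and centeredness is preserved under small perturbations, one may simply move $G$ into relative general position without ever leaving the class of centered colorful configurations---this sidesteps the step you flag as technically delicate, since there is no need for the perturbed $G$ to arise as the Gale transform of any particular Minkowski sum.
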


\subsection{A motivation from normal surface theory}
\newcommand\fan{\mathcal{F}}%
This dictionary between Minkowski sums and colorful configurations allows us
to resolve a conjecture of Ben Burton~\cite{Burton2003} about the complexity
of projective edge weight solution spaces in \emph{normal surface} theory. We
only describe the geometric formulation of the problem and refer
to~\cite[Ch.~5]{Burton2003} for the topological connections and implications.
Using the terminology of~\cite{Burton2003}, a \Defn{$d$-fan} $\fan$ is the
collection of three polyhedral cones $L_1, L_2, L_3 \subset \R^d$ such that
$L_i$ is linearly isomorphic to $\R^{d-2} \times \R_{\ge 0}$, i.e, each $L_i$
is a \emph{halfplane}, and such that   $L_i \cap L_j = L_1 \cap L_2 \cap L_3
\cong \R^{d-2}$ for all $i \neq j$. This is called the \Defn{axis} of the fan,
the cones $L_i$ are called \Defn{leafs}.  The fan is balanced if it partitions
$\R^d$ into three convex components and no two halfplanes are coplanar. In the
following all our fans will be balanced.  Given fans $\fan^0,\dots,\fan^\s$
with leafs $L^r_j$, their intersection is the collection of cones
\[
    L^0_{i_0} \cap \cdots \cap L^\s_{i_\s},
\]
for $i_0,\dots,i_\s = 1,2,3$. Define $U(d,k)$ to be the maximum of the number
of inclusion-maximal cones for the intersection of $k$ fans in general
position in $\R^d$. Burton conjectured the following.
\begin{conj}[{\cite[Conj.~5.5.14]{Burton2003}}]\label{conj:burton}
    For $d \ge 2$, $U(d,d-1) = 1 + 2^{d-1}$.
\end{conj}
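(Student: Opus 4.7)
The plan is to translate Burton's setup into Minkowski sums of triangles via normal fan duality and then apply the colorful Gale transform together with Theorem~\ref{thm:CC_upper}. First, I identify each balanced $d$-fan $\fan^r \subset \R^d$ with the outer normal fan of a $2$-simplex $T_r \subset \R^d$: the axis $A_r = L^r_1 \cap L^r_2 \cap L^r_3$ coincides with the $(d-2)$-dimensional lineality of the normal fan, i.e., the orthogonal complement of $\aff T_r$, and the three leaves $L^r_j$ are the outer normal cones of the three edges of $T_r$. In general position the linear subspaces parallel to $\aff T_0, \ldots, \aff T_{d-2}$ sum to $\R^d$ (since $2(d-1) \geq d$ for $d \geq 2$), so the Minkowski sum $P = T_0 + \cdots + T_{d-2}$ is a full-dimensional polytope in $\R^d$ whose normal fan is exactly the common refinement $\bigcap_r \fan^r$.

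Next I analyze the intersection cones $L^0_{i_0} \cap \cdots \cap L^{d-2}_{i_{d-2}}$. Each leaf has codimension one, so in general position this intersection has dimension at most one, and is therefore either $\{0\}$ or a ray. A ray $\rho$ is the outer normal cone of some facet $F$ of $P$, and its containment in every $L^r_{i_r}$ forces the corresponding summand of $F$ in the Minkowski decomposition to be the $i_r$-th edge of $T_r$; hence $F = E^0 + \cdots + E^{d-2}$ is a totally mixed facet, and conversely every totally mixed facet of $P$ arises in this way. The inclusion-maximal cones among the fan intersections are therefore in bijection with the totally mixed facets of $P$.

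I then invoke the colorful Gale transform from Section~\ref{sec:cayley}: applied to $T_0, \ldots, T_{d-2}$ it produces a colorful configuration $C = \{C_0, \ldots, C_{d-2}\}$ with $d-1$ classes of cardinality $3$ each. The associated Cayley polytope has $n = 3(d-1)$ vertices and affine dimension $2d-2$, so the Gale dual lies in $\R^{3(d-1)-(2d-2)-1} = \R^{d-2}$, precisely the setup of Theorem~\ref{thm:CC_upper}. Combining Lemma~\ref{lem:char_tmf} with that bound yields
\[
    U(d,d-1) \ \leq \ \csd(C) \ \leq \ 1 + \prod_{r=0}^{d-2} (|C_r| - 1) \ = \ 1 + 2^{d-1}.
\]

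For the matching lower bound I reverse-engineer Example~\ref{ex:bound} with $n_r = 3$: vertices $v_0, \ldots, v_{d-2}$ of a simplex in $\R^{d-2}$ with $0$ in its interior together with $C_r = \{v_r, -v_r, -2v_r\}$ yield $\csd(C) = 1 + 2^{d-1}$. Inverting the colorful Gale transform produces $d-1$ triangles in $\R^d$ whose Minkowski sum achieves $1 + 2^{d-1}$ totally mixed facets, and passing back to fans gives a matching configuration. The main obstacle is verifying that general position of the fans in Burton's sense corresponds, through the Cayley and Gale constructions, to centeredness and relative general position of the resulting colorful configuration, so that the hypotheses of Theorem~\ref{thm:CC_upper} are genuinely satisfied.
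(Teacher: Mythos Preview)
Your argument is correct and follows the same route as the paper: identify balanced fans with normal fans of triangles, match inclusion-maximal intersection cones with totally mixed facets of the Minkowski sum, and bound the latter via the colorful Gale transform and Theorem~\ref{thm:CC_upper}. The paper packages the last two steps into Theorem~\ref{thm:simplices} and invokes it directly, whereas you unwind the Gale-transform computation by hand (correctly obtaining $d-1$ color classes of size~$3$ in $\R^{d-2}$); you also supply the matching lower bound via Example~\ref{ex:bound}, which the paper leaves implicit since Burton already had configurations achieving $1+2^{d-1}$. The general-position concern you flag is handled exactly as in the proof of Theorem~\ref{thm:simplices}: the count of hitting simplices containing $0$ in their interior is upper-semicontinuous, so the maximum is attained at configurations in relative general position.
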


To see why Theorem~\ref{thm:simplices} settles Conjecture~\ref{conj:burton},
let $P = \conv(u_1,u_2,u_3) \subset \R^d$ be a triangle. The \Defn{normal fan}
of $P$ is the collection of cones $N_i = \{ \ell \in (\R^d)^\ast : \ell(u_i)
\ge \ell(u_j) \text{ for all } j \}$ for $i = 1,2,3$. These are polyhedral
cones given as the intersection of exactly two distinct linear halfspaces and
and hence the intersections $N_i \cap N_j$ for $i \neq j$ are halfplanes. 
Thus, every triangle furnishes a fan in the sense of Burton and it is easy to
see that every balanced fan arises this way. Thus every collection of fans in
relative general position corresponds to a collection of triangles
$P_0,P_1,\dots,P_\s \subset \R^d$ in relative general position.  The totally
mixed faces of the Minkowski sum $P_0 + \cdots + P_\s$ are exactly the faces
that arise as the Minkowski sum of an edge from each $P_r$. Every linear
function maximizing on such a totally mixed face is thus in the intersection
of one leave from each $P_i$. 

\begin{cor}
    The number of inclusion-maximal cones in the intersection of $d-1$ fans in
    $\R^d$ is at most $1 + 2^{d-1}$.
\end{cor}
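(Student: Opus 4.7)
The plan is to translate the statement via the fan--triangle dictionary established in the excerpt and then to invoke Theorem~\ref{thm:simplices}. Given $d-1$ balanced fans $\fan^0,\dots,\fan^{d-2}$ in general position in $\R^d$, I realize each $\fan^r$ as the normal fan of a triangle $P_r \subset \R^d$, yielding triangles $P_0,\dots,P_{d-2}$ in relative general position whose Minkowski sum $P = P_0 + \cdots + P_{d-2}$ is $d$-dimensional.

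The next step is to identify the inclusion-maximal cones in the intersection of the fans with the totally mixed facets of $P$. Each leaf $L^r$ of $\fan^r$ is the normal cone of an edge $F_r$ of $P_r$, so an intersection $L^0 \cap \cdots \cap L^{d-2}$ of one leaf per fan equals the normal cone in $P$ of the face $F_0 + \cdots + F_{d-2}$. Under relative general position, every nonempty such intersection is one-dimensional, and so $F_0 + \cdots + F_{d-2}$ has codimension one in $P$ and is therefore a totally mixed facet. In this way the inclusion-maximal cones of the fan intersection are placed in bijection with the totally mixed facets of $P$.

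Finally, applying Theorem~\ref{thm:simplices} with $\s = d-2$ and $n_0 = \cdots = n_{d-2} = 2$ gives the bound
\[
    1 + n_0 n_1 \cdots n_{d-2} \ = \ 1 + 2^{d-1}
\]
on the number of totally mixed facets, which is exactly the required bound on the number of inclusion-maximal cones. The main point that requires care is the mild discrepancy between the ambient dimension $\R^{n-\s-1} = \R^{d-1}$ appearing in the statement of Theorem~\ref{thm:simplices} and the ambient $\R^d$ in which the $P_r$ actually sit; I would handle this by noting that the count of totally mixed facets is captured by the colorful Gale transform of the configuration, and so is invariant under embedding the triangles into a higher-dimensional ambient space.
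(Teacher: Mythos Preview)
Your proposal is correct and follows the same route as the paper: the paper does not give a formal proof of the corollary, but the discussion immediately preceding it sets up exactly your fan--triangle dictionary (balanced fans are normal fans of triangles, inclusion-maximal cones correspond to totally mixed facets) and then appeals to Theorem~\ref{thm:simplices}. Your remark about the ambient-dimension mismatch is a legitimate point the paper glosses over, and your resolution via the colorful Gale transform is the right way to handle it.
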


\subsection{Minkowski transforms}
We close in Section~\ref{sec:Ptransform} with an alternative construction of
colorful configurations associated to Minkowski sums via projections of		
polytopes. These are constructed using McMullen's
transforms~\cite{McMullen1979}. In contrast to the colorful Gale transforms,
these \emph{Minkowski transforms} are realized in lower dimensional spaces but
at the expense of not containing information about Minkowski subsums. In
particular, their construction uses inequality descriptions of polytopes and
not vertex sets, as in the case of colorful Gale transforms.

\textbf{Acknowledgements.}
The topological part of this paper started in Berlin in 2014 and was
independently completed in Berlin/Paris and Jerusalem. This paper is an
amalgamation of our efforts.  K.~Adiprasito was supported by NSF Grant DMS
1128155 and ERCAG 320924.  P.~Brinkmann was supported by DFG within the
research training group ``Methods for Discrete Structures'' (GRK1408).
A.~Padrol thanks the support of the INSMI (CNRS) through the program PEPS
Jeunes Chercheur-e-s 2016.  P. Pat\'{a}k's research was supported by the ERC
Advanced grant no.~320924.  Z. Pat\'akov\'a's work was supported by the Israel
Science Foundation grant ISF-768/12.  R.~Sanyal was supported by the DFG
Collaborative Research Center SFB/TR 109 ``Discretization in Geometry and
Dynamics''.

\section{An upper bound for colorful simplicial depth}\label{sec:upper}

In this section we will cast the problem of finding tight upper bounds on the
colorful simplicial depth into one of combinatorial topology. More precisely,
we will relate the bound of Theorem~\ref{thm:CC_upper} to the Betti numbers of
certain simplicial complexes (Proposition~\ref{lem:ub_betti}).  To set the
stage, fix some $\n=(n_0,\dots, n_d)\in \Z^{d+1}$ with $n_i\geq 2$ and
define the \Defn{$\n$-colorful complex} as the simplicial complex 
\[
    \Rain \ = \ \Rain(\n) \ := \ \D_0 \join \cdots \join \D_d,
\]
where $\D_i$ is a $0$-dimensional complex on $n_i$ vertices and $\join$ denotes
the join. 
We identify $\D_i$ with its set of vertices and we assume that $\D_i \cap \D_j
= \emptyset$. This way, the vertex set of $\Rain$ is $\D : = \D_0 \cup \cdots
\cup \D_d$. The simplices $\sigma \in \Rain$, called \Defn{colorful
simplices}, are exactly the subsets $\sigma \subset \D$ with $|\sigma \cap
\D_i | \le 1$ for all $0 \le i \le d$.

In this language, a \Defn{colorful $\n$-configuration}
corresponds to a map $\f : \D \rightarrow \R^d$, which induces a map $\f
: \Rain \rightarrow \R^d$ that linearly interpolates on each simplex $\sigma
\in \Rain$ and thus
\[
    \f(\sigma) \ := \ \conv( \f(v) : v \in \sigma).
\]
Conversely, any map $\f : \Rain \rightarrow \R^d$ that is linear on each
simplex gives a colorful configuration by setting $C_i := \{ \f(v) : v \in
\D_i \}$ and we will identify colorful configurations and affine maps from
$\Rain$ to $\R^d$.  Thus $\f$ is a \Defn{centered} colorful configuration if
$0 \in \relint\conv \f(\D_i)$ for $0 \le i \le d$, and $\f$ is in
\Defn{relative general position} if $0\notin\f(\tau)$ for any simplex $\tau
\in \Rain$ of dimension $< d$.  

The \Defn{avoiding complex} associated to $\f$ is the simplicial subcomplex 
\[
    \Av \ = \ \Avf{\f} \ := \ \{ \sigma \in \Rain : 0 \not\in \f(\sigma) \}.
\]
\newcommand\Hit{\mathcal{H}}%
We define the \Defn{hitting set} $\Hit = \Hit(\f)$ of $\f$ to be the set of
inclusion-minimal faces of $\Rain\setminus\Av$.  If $\f$ is in general
position, then $\Hit = \Rain \setminus \Av$ and $\Hit = \{
\eta_1,\dots,\eta_m\}$ is a collection of $d$-dimensional simplices of
$\Rain$, the \Defn{hitting $d$-simplices} of $\f$. In particular, the colorful
simplicial depth of a configuration $\f$ in relative general position is
$\csd(\f) = |\Hit(\f)|$.

\newcommand\rEuler{\tilde\chi}%
\newcommand\Chain[2]{\mathrm{C}_{#1}(#2)}%
\newcommand\Hom[2]{\tilde{\mathrm{H}}_{#1}(#2)}%
For a simplicial complex $\cS$, we write $\Chain{*}{\cS}$ for the
\Defn{chain complex} of $\cS$ with coefficients in $\Z_2$. That is, for $-1
\le k \le \dim \cS$, $\Chain{k}{\cS}$ is the $\Z_2$-vector space with
basis given by the $k$-dimensional faces $\sigma$ of $\cS$. We will freely
abuse notation and treat $k$-chains of $\cS$ as collections of
$k$-dimensional faces.  The boundary map is given by 
\[
    \partial \sigma \ = \ \sum \tau,
\]
where the sum is over all faces $\tau \subset \sigma$ with $\dim \tau = \dim
\sigma -1$.  The $k$-th homology group is denoted by $\Hom{k}{\cS}=\ker
\partial_k/\im\partial_{k+1}$ and we write $\betti_k(\cS) = \dim_{\Z_2}
\Hom{k}{\cS}$ for the $k$-th \Defn{reduced Betti number} of $\cS$. We will
write $\Sphere^{d}$ for the sphere of dimension $d$.

\begin{prop}\label{prop:Rain_betti}
    Let $\n = (n_0,\dots,n_d)$ with $n_i \ge 2$ for all $0 \le i \le d$. Then
    \[
        \betti_k(\Rain(\n)) \ = \ 0 \quad \text{ for $k < d$ } 
        \qquad \text{ and } \qquad 
        \betti_d(\Rain(\n) ) \ = \ (n_0-1)(n_1-1) \cdots (n_d-1).
    \]
\end{prop}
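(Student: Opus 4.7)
The plan is to compute the homology of $\Rain(\n)$ directly from its description as an iterated join, using the Künneth-type formula for simplicial joins. Over a field $\mathbb{F}$, for any two finite simplicial complexes $X$ and $Y$ one has
\[
    \Hom{k}{X \join Y} \ \cong \ \bigoplus_{i+j = k-1} \Hom{i}{X} \otimes_{\mathbb{F}} \Hom{j}{Y};
\]
this is a standard consequence of the fact that the reduced chain complex of $X\join Y$ is (up to a degree shift) the tensor product of the reduced chain complexes of $X$ and $Y$. Working throughout with $\mathbb{F}=\Z_2$ avoids any Tor or sign issues.

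Each factor $\D_i$ is a discrete set of $n_i$ points, so its reduced homology is concentrated in degree $0$ with $\betti_0(\D_i)=n_i-1$ and $\betti_k(\D_i)=0$ for $k\neq 0$. Iterating the join formula across the $d+1$ factors of $\Rain(\n)=\D_0\join\cdots\join\D_d$ yields
\[
    \Hom{k}{\Rain(\n)} \ \cong \ \bigoplus_{i_0+\cdots+i_d\,=\,k-d}\ \bigotimes_{j=0}^d \Hom{i_j}{\D_j}.
\]
The only tuple $(i_0,\dots,i_d)$ giving a nonzero summand is $(0,\dots,0)$, because $\tilde{\mathrm{H}}_{i_j}(\D_j)=0$ as soon as $i_j\neq 0$. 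This forces $k=d$, so $\betti_k(\Rain(\n))=0$ for $k<d$, and
\[
    \betti_d(\Rain(\n)) \ = \ \prod_{j=0}^d \betti_0(\D_j) \ = \ \prod_{j=0}^d (n_j-1).
\]

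Operationally I would organize this as an induction on $d$: the base case $d=0$ is immediate since $\Rain(n_0)=\D_0$ is $n_0$ points with $\betti_0=n_0-1$, and the inductive step applies the join formula once to $\Rain(n_0,\dots,n_{d-1})\join\D_d$, using the inductive hypothesis together with the computation of $\tilde{\mathrm{H}}_*(\D_d)$. There is essentially no obstacle beyond invoking the join formula correctly; the mild subtlety is remembering the degree shift by $-1$ in the join (equivalently, the shift by $+1$ in dimensions because $\dim(\sigma\join\tau)=\dim\sigma+\dim\tau+1$), which is exactly what turns the condition $\sum i_j = k-d$ into the requirement that $k=d$ when each $i_j$ must be $0$. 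Alternatively, one can phrase the conclusion as the homotopy equivalence $\Rain(\n)\simeq\bigvee^{(n_0-1)\cdots(n_d-1)}\Sphere^{d}$, obtained from the join identity $\Sphere^{m}\join\Sphere^{n}\simeq\Sphere^{m+n+1}$ applied to the wedge decompositions $\D_i\simeq\bigvee^{n_i-1}\Sphere^{0}$, which makes the Betti number computation transparent.
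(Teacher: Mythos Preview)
Your proof is correct. Your primary argument via the K\"unneth formula for joins is a purely algebraic route, whereas the paper argues topologically: it notes that each $\D_i$ is a wedge of $n_i-1$ copies of $\Sphere^0$ and that join distributes over wedge, obtaining $\Rain(\n)\simeq\bigvee_{(n_0-1)\cdots(n_d-1)}\Sphere^d$ directly---exactly the alternative you sketch in your final sentence. Both approaches are standard and equally short; yours has the minor advantage of not needing the homotopy-theoretic distributivity fact, while the paper's gives the stronger conclusion of an explicit homotopy type.
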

\begin{proof}
    For $0 \le i \le d$, the complex $\D_i$ is a wedge of $n_i-1$ spheres of
    dimension $0$. The complex $\Rain$ is by construction a join of
    $\D_0,\dots,\D_d$ and, since the join distributes over wedge
    (see~\cite[Ex.~6.2.1]{Using}), we have 
    \[
        \Rain \ \simeq \ \bigvee_{(n_0-1) \cdots (n_d-1)} \Sphere^d.
    \]
    This proves the claim.
\end{proof}

The main connection between the colorful simplicial depth of $\f$ and the
topology of $\Av(\f)$ is the following lemma.

\begin{lem}\label{lem:ub_betti}
    For $\n = (n_0,\dots,n_d)$ with $n_i \ge 2$, let $\f$ be a centered
    colorful $\n$-configuration in relative general position. Then 
    \[
        \csd(\f) \ \le \ \prod_{i=0}^d (n_i-1) + \betti_{d-1}(\Av).
    \]
\end{lem}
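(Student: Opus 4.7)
The plan is to identify $\csd(\f)$ with a single relative Betti number of the pair $(\Rain,\Av)$ and then read the bound off the long exact sequence of the pair combined with Proposition~\ref{prop:Rain_betti}. Throughout I use reduced homology with $\Z_2$ coefficients.

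First I would show that relative general position forces the relative chain complex $\Chain{*}{\Rain,\Av}$ to be concentrated in degree $d$. Indeed, for every $\sigma\in\Rain$ with $\dim\sigma<d$ the hypothesis gives $0\notin\f(\sigma)$, so $\sigma\in\Av$; since $\dim\Rain=d$ there are also no simplices of dimension $>d$ to worry about. Hence $\Chain{k}{\Rain,\Av}=0$ for $k\neq d$, while $\Chain{d}{\Rain,\Av}$ has as a $\Z_2$-basis precisely the $d$-faces of $\Rain$ not in $\Av$, which in general position is exactly $\Hit(\f)$. Because the boundary maps in and out of $\Chain{d}{\Rain,\Av}$ are zero, this yields
\[
    \Hom{d}{\Rain,\Av} \ = \ \Chain{d}{\Rain,\Av}, \qquad \dim_{\Z_2}\Hom{d}{\Rain,\Av} \ = \ |\Hit(\f)| \ = \ \csd(\f),
\]
and $\Hom{k}{\Rain,\Av}=0$ for $k\neq d$.

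Next I would feed this into the long exact sequence of the pair $(\Rain,\Av)$:
\[
    \cdots \to \Hom{d}{\Rain} \xrightarrow{\phi} \Hom{d}{\Rain,\Av} \xrightarrow{\partial} \Hom{d-1}{\Av} \to \Hom{d-1}{\Rain} \to \cdots
\]
By Proposition~\ref{prop:Rain_betti}, $\Hom{d-1}{\Rain}=0$ and $\betti_d(\Rain)=\prod_{i=0}^d(n_i-1)$. Exactness at $\Hom{d-1}{\Av}$ makes $\partial$ surjective, so
\[
    \csd(\f) \ = \ \dim_{\Z_2}\Hom{d}{\Rain,\Av} \ = \ \dim_{\Z_2}\im\phi \ + \ \betti_{d-1}(\Av) \ \le \ \betti_d(\Rain) + \betti_{d-1}(\Av),
\]
which is the asserted inequality.

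The only delicate point is the first step: checking that relative general position both annihilates $\Chain{k}{\Rain,\Av}$ for $k<d$ and puts $\Hit(\f)$ in bijection with a basis of $\Chain{d}{\Rain,\Av}$. This relies on the observation that in general position the inclusion-minimality in the definition of $\Hit$ is automatic, since $\Rain\setminus\Av$ contains no faces of dimension $<d$. Once this bookkeeping is in place, the remainder is a mechanical consequence of the long exact sequence together with the Betti computation for $\Rain(\n)$; no additional input about $\f$ is needed beyond the two running hypotheses.
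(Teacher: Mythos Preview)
Your argument is correct. The paper takes a slightly different route: instead of invoking the long exact sequence of the pair $(\Rain,\Av)$, it computes the reduced Euler characteristic $\rEuler(\Av)$ in two ways. On the one hand $\Av$ agrees with $\Rain$ except for the missing $d$-faces in $\Hit$, giving $\rEuler(\Av)=\rEuler(\Rain)+(-1)^{d-1}\csd(\f)$; on the other hand, since the low-dimensional homology of $\Av$ coincides with that of $\Rain$, one has $\rEuler(\Av)=(-1)^{d-1}\betti_{d-1}(\Av)+(-1)^d\betti_d(\Av)$. Equating yields the exact identity $\csd(\f)=\prod_i(n_i-1)+\betti_{d-1}(\Av)-\betti_d(\Av)$, from which the inequality follows by dropping $\betti_d(\Av)\ge 0$. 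Your long exact sequence argument is the homological counterpart of this Euler-characteristic count; if you also used that $\Hom{d+1}{\Rain,\Av}=0$ forces $\Hom{d}{\Av}\hookrightarrow\Hom{d}{\Rain}$, you would recover the same identity (with $\dim\im\phi=\betti_d(\Rain)-\betti_d(\Av)$). Either approach works; the Euler-characteristic version is marginally more elementary, while yours makes the role of the pair $(\Rain,\Av)$ and of the connecting map $\partial$ more explicit.
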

\begin{proof}
    Let $\Av = \Av(\f)$ be the avoiding complex of $\f$.  Using the
    Euler-Poincar\'e formula, we compute the reduced Euler characteristic
    $\rEuler(\Av)$ in two different ways.  The avoiding complex $\Av$ and the
    colorful complex $\Rain = \Rain(\n)$ agree on all faces up to dimension
    $d-1$ and $\Av$ is only missing the hitting simplices $\Hit = \Hit(\f)$.
    Thus, by Proposition~\ref{prop:Rain_betti},
    \[
        \rEuler(\Av) \  = \ \rEuler(\Rain) + (-1)^{d-1} |\Hit| \ = \
        (-1)^d \prod_{i=0}^d (n_i-1) + (-1)^{d-1} \csd(\f).
    \]
    On the other hand, $\Hom{k}{\Av} = \Hom{k}{\Rain}$ for $k < d-1$ and from
    Proposition~\ref{prop:Rain_betti} we get 
    \[
        \rEuler(\Av) \ = \ (-1)^{d-1}\betti_{d-1}(\Av) + (-1)^d
        \betti_{d}(\Av).
    \]
    This yields
    \[
        \csd(\f) \ = \ \prod_{i=0}^d (n_i-1) + \betti_{d-1}(\Av) -
        \betti_{d}(\Av). \qedhere
    \]
\end{proof}

By way of Lemma~\ref{lem:ub_betti}, Theorem~\ref{thm:CC_upper} follows
from the main result of this section.

\begin{thm}\label{thm:betti}
    Let $\n = (n_0,\dots,n_d)$ with $n_i \ge 2$. Then $\betti_{d-1}(\Avf{\f})
    = 1$ for any centered colorful $\n$-configuration $\f$ in relative general
    position.
\end{thm}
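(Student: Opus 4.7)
My strategy combines a topological lower bound $\betti_{d-1}(\Avf{\f}) \ge 1$ with an invariance argument under local deformations of $\f$ connecting it to the standard configuration of Example~\ref{ex:bound}.

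For the lower bound, observe that since every face of $\Av$ avoids $0$, the configuration map $\f$ restricts to a continuous map $\Av \to \R^d \setminus \{0\} \simeq \Sphere^{d-1}$. For any hitting simplex $\eta \in \Hit$, the cycle $\partial \eta$ lies in $\Av$ by the general-position assumption, and its image in $\R^d \setminus \{0\}$ surrounds the origin once, so it represents a generator of $\Hom{d-1}{\Sphere^{d-1}}$. Hence the induced map $\Hom{d-1}{\Av} \to \Z_2$ is surjective, giving $\betti_{d-1}(\Av) \ge 1$.

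For the upper bound, I define a \emph{flip} as a one-parameter deformation $\f_t$ of $\f$ that remains centered and in relative general position for $t \ne 0$ but degenerates at $t=0$: one vertex image $\f_0(v)$ lies on the hyperplane $\aff(\{0\} \cup \f_0(\tau))$ for exactly one $(d-1)$-colorful simplex $\tau$ not containing $v$. Across such a flip, exactly one $d$-simplex $\sigma = \{v\} \cup \tau$ toggles hitting-status, so the pre- and post-flip avoiding complexes $\Av_-$ and $\Av_+$ differ by the single face $\sigma$. The long exact sequence of the pair $(\Av_-,\Av_+)$,
\[
    0 \to \Hom{d}{\Av_+} \to \Hom{d}{\Av_-} \to \Z_2 \xrightarrow{\partial_*} \Hom{d-1}{\Av_+} \to \Hom{d-1}{\Av_-} \to 0,
\]
has connecting map sending the generator to the class $[\partial\sigma]$. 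Two cases arise: either $[\partial\sigma] = 0$, and then $\betti_{d-1}$ stays the same while $\betti_d$ increases by $1$; or $[\partial\sigma] \ne 0$, and then $\betti_{d-1}$ decreases by $1$ while $\betti_d$ is unchanged. The second possibility is incompatible with the lower bound once $\betti_{d-1} = 1$, so whenever $\betti_{d-1}(\Av(\f_t)) = 1$, this value is preserved across the flip.

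It remains to connect any admissible $\f$ to the standard configuration $\f_1$ of Example~\ref{ex:bound} by a generic path of centered configurations decomposing into finitely many flips, and to verify the base case. For $\f_1$, combining Lemma~\ref{lem:ub_betti} with the explicit count $\csd(\f_1) = 1 + \prod_i(n_i - 1)$ gives $\betti_{d-1}(\Av(\f_1)) - \betti_d(\Av(\f_1)) = 1$, and a direct analysis — for instance a deformation retract of $\Av(\f_1)$ onto a $(d-1)$-sphere that exploits the symmetric antipodal join structure — yields $\betti_d(\Av(\f_1)) = 0$, and hence $\betti_{d-1}(\Av(\f_1)) = 1$. The main obstacle will be the path/connectedness step: one must show that any two centered configurations in relative general position can be linked by a path staying inside the centered locus and crossing the general-position locus only transversally, so that each degenerate moment is indeed a flip as above; this demands a careful genericity argument and some bookkeeping to ensure centeredness is preserved throughout.
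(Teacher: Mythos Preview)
Your overall architecture---lower bound via the map to $\Sphere^{d-1}$, invariance of $\betti_{d-1}$ under flips, connectedness of the space of centered configurations, and an explicit base case---is the same as the paper's. The gap is in the invariance step.

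Your description of a flip is not what actually happens at a generic failure of relative general position. At such a moment some colorful $(d-1)$-simplex $\rho$ satisfies $0 \in \relint \f_0(\rho)$, and \emph{every} colorful $d$-simplex containing $\rho$ toggles its hitting status; there are $n_i \ge 2$ of them, where $i$ is the color missing from $\rho$. Moreover, because $\f$ is centered, the vertices of $\D_i$ lie on both sides of the hyperplane $\aff \f_0(\rho)$, so some of these $d$-simplices pass from hitting to avoiding while others go the opposite way. Thus $\Av_-$ and $\Av_+$ are \emph{not nested}, and the long exact sequence of the pair $(\Av_-,\Av_+)$ is unavailable. This is exactly why the paper's invariance lemma (Lemma~\ref{lem:flip}) is more delicate: it works at the chain level, using Corollary~\ref{cor:two_hitting} and constructing explicit ``cylinder'' chains in three separate cases (both hitting simplices new, both old, one of each) to show that any two hitting simplices remain homologous after the flip.

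Even if only one $d$-simplex toggled, your long exact sequence argument is one-directional. It shows that if the \emph{smaller} complex $\Av_+$ has $\betti_{d-1}=1$ then so does the larger $\Av_-$ (since otherwise $\betti_{d-1}(\Av_-)=0$, contradicting the lower bound). But in the other direction, from $\betti_{d-1}(\Av_-)=1$ the sequence permits $\betti_{d-1}(\Av_+)=2$, and the lower bound does not rule this out. Since a path from the standard configuration to an arbitrary one will involve flips in both directions, you cannot propagate $\betti_{d-1}=1$ along it with this argument alone.
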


We note that $\betti_{d-1}(\Av(\f))$ is never trivial for centered
configurations in relative general position because $\Av(\f)$ retracts onto
the boundary of any hitting simplex.

\begin{prop}\label{prop:betti_pos}
    Let $\f$ be a centered colorful configuration in relative general
    position. Then
    \[
        \betti_{d-1}(\Avf{\f}) \ \ge \ 1.
    \]
\end{prop}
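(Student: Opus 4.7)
The plan is to exhibit the simplicial boundary of a single hitting simplex as a $(d-1)$-cycle in $\Avf{\f}$ whose homology class is non-trivial. By the Colorful Carath\'eodory Theorem (Theorem~\ref{thm:CC}) applied to the centered configuration $\f$, there exists at least one hitting $d$-simplex $\eta \in \Rain$. Since $\f$ is in relative general position, every proper face $\tau \subsetneq \eta$ satisfies $0 \notin \f(\tau)$, so the boundary $\partial\eta$ is a subcomplex of $\Av$. Combining $0 \in \f(\eta)$ with $0 \notin \f(\partial\eta)$ also shows that the origin lies in the interior of the $d$-dimensional simplex $\f(\eta) \subset \R^d$.

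Next I would construct a continuous map $\pi : \Av \to \Sphere^{d-1}$ by composing $\f$ (extended linearly on each face) with the radial projection $x \mapsto x/\|x\|$; this is well-defined precisely because $0 \notin \f(\sigma)$ for every $\sigma \in \Av$. The restriction $\f|_\eta$ is an affine map onto a full-dimensional simplex, hence a homeomorphism, so $\f$ sends $\partial\eta$ homeomorphically onto $\partial \f(\eta)$. Since $0$ is interior to $\f(\eta)$, radial projection maps $\partial \f(\eta)$ homeomorphically onto $\Sphere^{d-1}$, and consequently the composition
\[
    \partial\eta \ \longhookrightarrow \ \Av \ \xrightarrow{\ \pi\ } \ \Sphere^{d-1}
\]
is itself a homeomorphism and thus induces an isomorphism on $\Hom{d-1}{-}$ (with $\Z_2$ coefficients).

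It follows that the fundamental class $[\partial\eta] \in \Hom{d-1}{\partial\eta}$ has non-zero image in $\Hom{d-1}{\Sphere^{d-1}} \cong \Z_2$. Factoring through $\Hom{d-1}{\Av}$ forces the image of $[\partial\eta]$ there to be non-zero as well, and we conclude $\betti_{d-1}(\Avf{\f}) \ge 1$.

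There is no substantive obstacle; the only subtlety is to use the relative general position hypothesis twice --- once to ensure $\partial\eta \subseteq \Av$ so that $\pi$ is defined on $\partial\eta$, and once to upgrade ``$0 \in \f(\eta)$'' to ``$0 \in \Int \f(\eta)$'' so that radial projection restricted to $\partial\f(\eta)$ is a homeomorphism onto $\Sphere^{d-1}$. Both are immediate from the definitions, which is why this proposition (unlike the matching upper bound of Theorem~\ref{thm:betti}) does not require the flipping machinery developed later.
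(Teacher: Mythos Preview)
Your argument is correct and is essentially identical to the paper's own proof: both pick a hitting simplex $\eta$ via the Colorful Carath\'eodory Theorem, use relative general position to ensure $\partial\eta\subseteq\Av$ and $0\in\Int\f(\eta)$, and then factor the homeomorphism $\partial\eta\to\Sphere^{d-1}$ (given by $\f$ followed by radial projection) through $\Av$ to conclude that $[\partial\eta]$ is a nontrivial class in $\Hom{d-1}{\Av}$. The paper phrases the last step as a commutative triangle showing surjectivity of $\hat\f_*$, but the content is the same.
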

\begin{proof}
    By construction, the avoiding complex $\Av = \Avf{\f}$ satisfies $\f(\Av)
    \subset \R^d \setminus \{0\}$. Hence, if $\f$ is followed by the radial
    projection of $\R^d \setminus \{0\}$ onto the $(d-1)$-sphere
    $\Sphere^{d-1}$, we obtain a continuous map $\hat \f : \Av \rightarrow
    \Sphere^{d-1}$.  By the colorful Carath\'eodory Theorem~\ref{thm:CC},
    there is at least one colorful $d$-simplex $\eta \in \Hit = \Rain
    \setminus \Av$. The assumption of relative general position yields $0 \in
    \Int \f(\eta)$. Let $\Sigma$ be the boundary complex of $\eta$. Since the
    skeleta of $\Rain$ and $\Av$ coincide in all dimensions $\le d-1$, we have
    that $\Sigma \subseteq \Av$ is a subcomplex and $\hat \f$ restricted to
    $\Sigma$ is a homeomorphism. Thus,
    \[  
        \xymatrix{
            \Hom{d-1}{\Av} \ar@{<-}[d] & \Hom{d-1}{\Sphere^{d-1}}
            \ar@{<-}[l]_{\hat \f_*} \ar@{<-}[ld]_{id}\\ \Hom{d-1}{\Sigma}}
   \]
   commutes and, in particular, $\hat \f$ is surjective in homology. This
   shows that $\betti_{d-1}(\Av) >0$. 
\end{proof}

The proof of Proposition~\ref{prop:betti_pos} actually shows that the boundary
of any hitting simplex yields a non-trivial class in $\Hom{d-1}{\Av(\f)}$. The
following proposition shows that the hitting simplices even generate.

\begin{prop}\label{prop:Hit_gen}
    Let $\f$ be a centered colorful configuration in relative general position
    with avoiding complex $\Av = \Avf{\f}$ and hitting set $\Hit = \Hit(\f)$.
    Then $\Hom{d-1}{\Av}$ is generated by the cycles $\partial \eta$ for $\eta
    \in \Hit$.
\end{prop}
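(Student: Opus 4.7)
The plan is to apply the long exact sequence of the pair $(\Rain, \Av)$ and to identify the connecting homomorphism explicitly. Set $\Rain = \Rain(\n)$ and $\Av = \Avf{\f}$.

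First I would analyze the relative chain complex $\Chain{*}{\Rain,\Av} = \Chain{*}{\Rain}/\Chain{*}{\Av}$. Since $\Rain$ is a join of $d+1$ zero-dimensional complexes, $\dim \Rain = d$, so $\Chain{k}{\Rain} = 0$ for $k > d$. Because $\f$ is in relative general position, every simplex of dimension $< d$ lies in $\Av$, and therefore $\Chain{k}{\Av} = \Chain{k}{\Rain}$ for $k < d$. In the same situation, $\Hit = \Rain \setminus \Av$ consists exactly of the $d$-dimensional hitting simplices. Consequently $\Chain{*}{\Rain,\Av}$ is concentrated in degree $d$, freely generated (over $\Z_2$) by $\Hit$, with zero differential. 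In particular,
\[
    \Hom{d}{\Rain,\Av} \ \cong \ \Chain{d}{\Rain,\Av} \ = \ \bigoplus_{\eta \in \Hit} \Z_2 \cdot \eta,
\]
and $\Hom{k}{\Rain,\Av} = 0$ for $k \neq d$.

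Next I would invoke the long exact sequence in reduced homology of the pair:
\[
    \cdots \ \longrightarrow \ \Hom{d}{\Rain} \ \longrightarrow \ \Hom{d}{\Rain,\Av} \ \stackrel{\partial_*}{\longrightarrow} \ \Hom{d-1}{\Av} \ \longrightarrow \ \Hom{d-1}{\Rain} \ \longrightarrow \ \cdots
\]
By Proposition~\ref{prop:Rain_betti}, $\Hom{d-1}{\Rain} = 0$, so the connecting homomorphism $\partial_*$ is surjective. The definition of $\partial_*$ is precisely: lift a relative cycle to a chain in $\Rain$ and take its boundary, which is a cycle in $\Av$. For the generator $\eta \in \Hit$ of $\Chain{d}{\Rain,\Av}$, this lift can be taken to be $\eta$ itself, and $\partial \eta$ already lies in $\Chain{d-1}{\Av}$ (again using that the $(d-1)$-skeleton of $\Rain$ is contained in $\Av$). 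Hence $\partial_*[\eta] = [\partial \eta]$ in $\Hom{d-1}{\Av}$, and the classes $\{[\partial \eta] : \eta \in \Hit\}$ span $\Hom{d-1}{\Av}$, as desired.

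The argument is essentially formal once the relative chain complex has been identified; the only place where care is needed is in using the relative general position hypothesis to ensure both that the full $(d-1)$-skeleton of $\Rain$ lies in $\Av$ (so the relative complex is concentrated in degree $d$) and that inclusion-minimal faces of $\Rain \setminus \Av$ are themselves the top-dimensional simplices. No further obstacle is expected.
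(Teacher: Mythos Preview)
Your argument is correct and is essentially the same as the paper's: both use that $\Hom{d-1}{\Rain}=0$ together with the decomposition $\Chain{d}{\Rain}=\Chain{d}{\Av}\oplus \Z_2\Hit$ (and $\Chain{k}{\Rain}=\Chain{k}{\Av}$ for $k\le d-1$). The only difference is packaging---the paper works directly at the chain level, whereas you phrase the same computation via the long exact sequence of the pair $(\Rain,\Av)$ and its connecting homomorphism.
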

\begin{proof}
    Since $\Hom{d-1}{\Rain} = 0$, we have, by definition, $\im \partial_{d} =
    \ker \partial_{d-1}$. Since $\Chain{k}{\Rain} = \Chain{k}{\Av}$ for $k \le
    d-1$ and 
    \[
        \Chain{d}{\Rain} \ = \ \Chain{d}{\Av} \oplus \Z_2 \Hit
    \]
    the result follows.
\end{proof}

As a consequence of Proposition~\ref{prop:Hit_gen} and
Proposition~\ref{prop:betti_pos}, we note the following useful reformulation
of Theorem~\ref{thm:betti}.

\begin{cor}\label{cor:two_hitting}
    Let $\f$ be a centered colorful configuration in relative general position
    with avoiding complex $\Av = \Avf{\f}$.  Then $\betti_{d-1}(\Av)=1$ if and
    only if any two hitting simplices $\eta_1,\eta_2 \in \Hit(\f)$ are
    homologous. That is, there is some $B \in \Chain{d}{\Av}$ such that 
    \begin{equation}\label{eqn:Hit_hom}
        \partial \eta_1 + \partial \eta_2 \ = \  \partial B.
    \end{equation}
\end{cor}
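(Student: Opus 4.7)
The plan is to read off the corollary directly from the two preceding propositions, using that we work with $\Z_2$ coefficients. First I would invoke Proposition~\ref{prop:Hit_gen} to note that the classes $[\partial \eta]$, as $\eta$ ranges over $\Hit(\f)$, form a spanning set of the $\Z_2$-vector space $\Hom{d-1}{\Av}$. Next I would observe that the argument in the proof of Proposition~\ref{prop:betti_pos} applies verbatim to each individual hitting simplex: for every $\eta \in \Hit(\f)$, the boundary complex $\partial \eta$ sits inside $\Av$ (since all proper faces of $\eta$ are of dimension $\le d-1$ and hence avoid the origin by relative general position), and the radial projection $\hat \f$ restricts to a homeomorphism $\partial \eta \to \Sphere^{d-1}$. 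Consequently $[\partial \eta] \neq 0$ in $\Hom{d-1}{\Av}$ for every $\eta \in \Hit(\f)$.

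Now assume $\betti_{d-1}(\Av)=1$. Then $\Hom{d-1}{\Av} \cong \Z_2$ has a unique nonzero element, and every $[\partial \eta]$ must equal it. In particular $[\partial \eta_1] = [\partial \eta_2]$ for any pair of hitting simplices, which over $\Z_2$ translates to $\partial \eta_1 + \partial \eta_2 = \partial B$ for some $B \in \Chain{d}{\Av}$, which is exactly~\eqref{eqn:Hit_hom}. Conversely, if~\eqref{eqn:Hit_hom} holds for every pair $\eta_1,\eta_2 \in \Hit(\f)$, then the spanning set $\{[\partial \eta] : \eta \in \Hit(\f)\}$ collapses to a single class, which is nonzero by the observation above. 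Hence $\Hom{d-1}{\Av}$ is at most one-dimensional, and combined with Proposition~\ref{prop:betti_pos} we obtain $\betti_{d-1}(\Av) = 1$.

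The content is essentially a bookkeeping exercise, so there is no real obstacle at this step; the corollary is valuable mainly as a convenient reformulation, since it reduces the proof of Theorem~\ref{thm:betti} to the concrete combinatorial task of exhibiting, for any two hitting simplices, an explicit $d$-chain $B$ in $\Av$ whose boundary is $\partial \eta_1 + \partial \eta_2$.
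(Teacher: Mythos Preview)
Your argument is correct and is exactly the intended one: the paper itself gives no separate proof but simply states that the corollary is a consequence of Proposition~\ref{prop:Hit_gen} and Proposition~\ref{prop:betti_pos}, and you have spelled out precisely that deduction. The only minor redundancy is that once the spanning set collapses to a single nonzero class you already have $\betti_{d-1}(\Av)=1$ without a separate appeal to Proposition~\ref{prop:betti_pos}, but this does no harm.
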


\newcommand\flip{\stackrel{\rho}{\rightsquigarrow}}%
Let $\f, \f' : \Rain \rightarrow \R^d$ be two centered colorful configurations
in relative general position. We say that $\f'$ is obtained from $\f$ by a
\Defn{flip} if there is a continuous family $(\f_t : \Rain \to \R^d)_{-1 \le t
\le 1}$ of centered configurations such that $\f_{-1} = \f$ and $\f_{1} = \f'$
and $\f_t$ is in relative general position at all times except for $t = 0$. At
time $t=0$, the unique minimal face $\rho \in \Rain$ with $0 \in \relint
\f_0(\rho)$ is of dimension $d-1$. To emphasize $\rho$, we shall say that $\f$
and $\f'$ are related by a flip over $\rho$ and we write $\f \flip \f'$.
Informally, think of a flip as a continuous motion of $\f$ to $\f'$ such that
the origin passes through the ridge $\rho$.

\begin{figure}[htpb]
    \includegraphics[width=.4\linewidth]{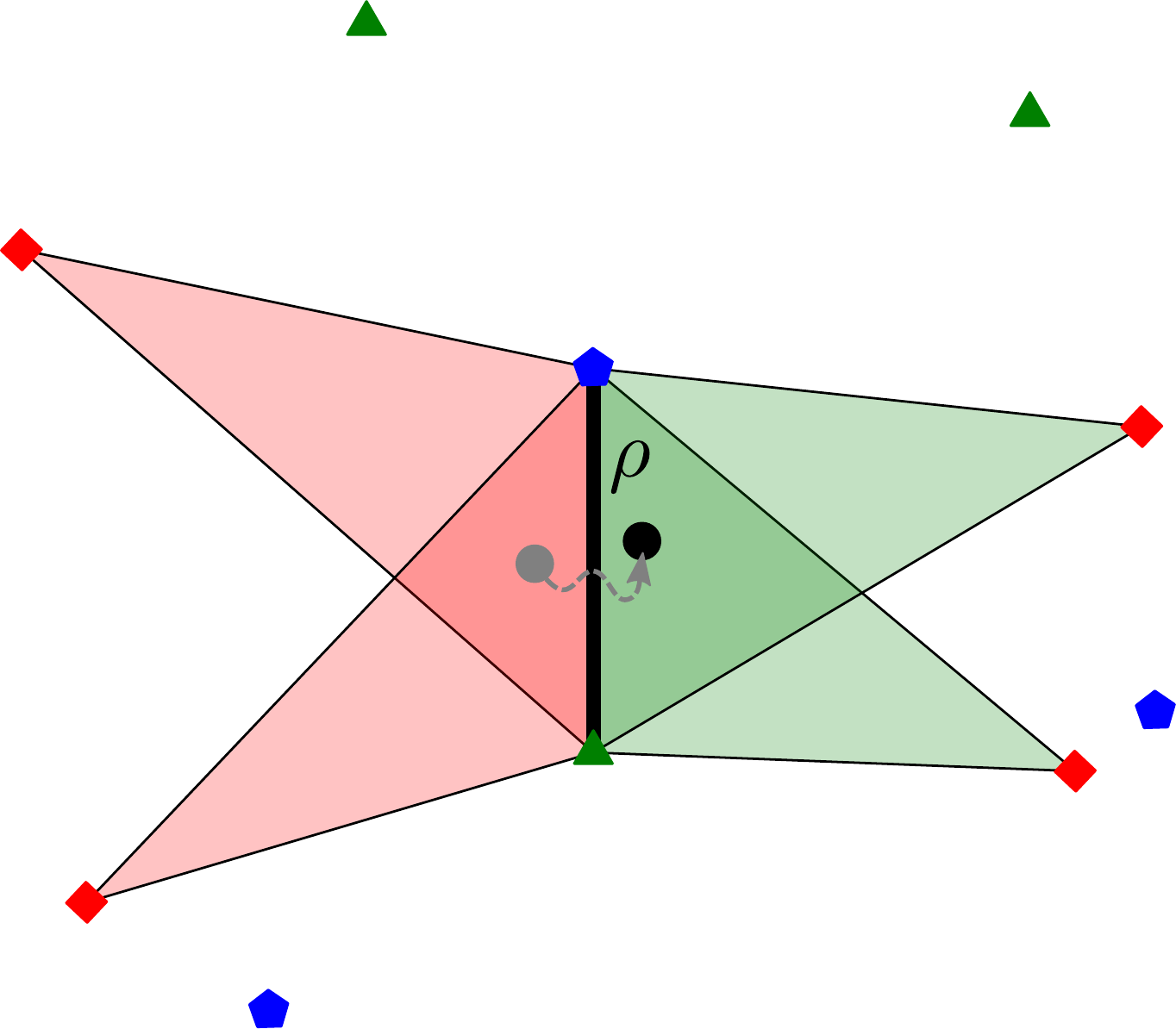}
    \caption{One can sometimes think of a $\rho$-flip as if the origin moved
    continuously through $\rho$ without crossing any other colorful ridge.
    Colorful simplices containing $\rho$ change their status, the remaining
    are not affected.}
\end{figure}

\begin{prop}\label{prop:combinatorial_flip}
     Let $\f \flip \f'$ be two configurations in relative general position
     related by a flip in~$\rho$ and let $\Av,\Av'$ and $\Hit,\Hit'$ be the
     respective avoiding complexes and hitting sets.  A colorful $d$-simplex
     $\sigma$ is in the symmetric difference $\Hit \Symm \Hit' = (\Hit
     \setminus \Hit') \cup (\Hit' \setminus \Hit)$ if and only if $\rho
     \subset \sigma$. Thus, a colorful $d$-simplex $\sigma \in \Av \cup \Av'$
     is contained in $\Av \cap \Av'$ if and only if $\rho$ is not a face of
     $\sigma$.
\end{prop}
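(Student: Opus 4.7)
The plan is to track the hitting status of each colorful $d$-simplex $\sigma$ along the family $\{\f_t\}_{t\in[-1,1]}$ connecting $\f_{-1}=\f$ to $\f_1=\f'$ and to argue that this status differs between the two endpoints precisely when $\rho\subset\sigma$. The two implications call for related but distinct arguments.

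For the direction ``$\sigma\in\Hit\Symm\Hit'\Rightarrow\rho\subset\sigma$'', I would consider the set $T_\sigma:=\{t\in[-1,1]:0\in\f_t(\sigma)\}$. Continuity of the family makes $T_\sigma$ closed, and the main step is to show that, under the assumption $\rho\not\subset\sigma$, it is also open, hence clopen in the connected interval $[-1,1]$ --- either empty or everything, so $\sigma$ has the same status at $\f_{-1}$ and $\f_1$. For openness at $t_0\in T_\sigma$ with $t_0\neq 0$: relative general position of $\f_{t_0}$ prevents $0$ from lying in any proper face of $\f_{t_0}(\sigma)$, so $0$ sits in the interior of the (necessarily non-degenerate) $d$-simplex $\f_{t_0}(\sigma)$, which is an open condition. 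For $t_0=0$: one has $0\in\relint\f_0(\tau)$ for some face $\tau\subseteq\sigma$, and by the flip hypothesis the minimal such $\tau$ is $\rho$, forcing $\rho\subset\sigma$ and contradicting the standing assumption.

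For the converse, I would write $\sigma=\rho\cup\{v\}$ and analyze the barycentric coordinates $\lambda_u(t)$ of $0$ with respect to $\f_t(\sigma)$. These are well-defined and continuous near $t=0$ because $\f_0(v)\notin\aff\f_0(\rho)$ (otherwise the minimal face containing $0$ at $t=0$ would be strictly smaller than $\rho$, contradicting its minimality), so $\f_0(\sigma)$ is non-degenerate. At $t=0$ we have $\lambda_v(0)=0$ while $\lambda_u(0)>0$ for $u\in\rho$; continuity keeps the latter positive on a neighborhood of $0$, and $\lambda_v(t)$ cannot vanish for $t\neq 0$ close to $0$ without the remaining positive coefficients witnessing $0\in\relint\f_t(\rho)$ and violating relative general position at $t$. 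Hence near $t=0$, $\sigma$ is hitting iff $\lambda_v(t)>0$, and the clopen argument of the previous paragraph applied to $[-1,0)$ and $(0,1]$ separately shows the status is already constant on each half-interval. The decisive input is that the flip over $\rho$ is a \emph{transversal} crossing of the codimension-one wall $W_\rho:=\{\f:0\in\f(\rho)\}$, forcing $\lambda_v$ to switch sign at $t=0$ and giving the status change between $\f$ and $\f'$. The second assertion of the proposition, about $\sigma\in\Av\cap\Av'$, is then immediate by contraposition.

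The main obstacle is justifying the transversality of this wall-crossing: nothing in the formal definition of a flip as stated prevents $\lambda_v$ from having a zero of even order at $t=0$, in which case no sign change would occur and the converse direction would fail. This transversality is implicit in the intuitive picture of the origin ``passing through'' $\rho$ described just before the proposition, and can be secured either by reading it into the definition of a flip or by replacing $\f_t$ with a nearby generic family sharing the same endpoints and crossing $W_\rho$ transversally at $t=0$.
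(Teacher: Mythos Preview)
Your approach is essentially the paper's, only fleshed out considerably: the paper's own proof is two sentences that simply assert that near $t=0$ every $d$-simplex containing $\rho$ switches status, and then concludes. It neither spells out the clopen/constancy argument on $[-1,0)\cup(0,1]$ nor explicitly argues the direction ``$\rho\not\subset\sigma\Rightarrow\sigma\notin\Hit\Symm\Hit'$''. Your treatment of both directions via $T_\sigma$ and barycentric coordinates is a reasonable way to make this honest.

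The transversality gap you flag is real and present in the paper as well: the stated definition of a flip does not rule out a tangential touching of the wall $W_\rho$, so strictly speaking the ``if'' direction is not forced. The paper's informal phrase ``the origin passes through the ridge $\rho$'' is exactly the transversality assumption you want to read in. Note also that the only place flips are manufactured is Proposition~\ref{prop:connectedflipgraph}, where one takes a generic path in the open configuration space $\mathbf{F}\setminus X$; such a path crosses each codimension-one wall $W_\rho$ transversally, so your second proposed fix (perturb to a generic family) is precisely what the paper implicitly does, and the gap is harmless for the subsequent arguments.
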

\begin{proof}
     Let $(\f_t)_{-1 \le t \le 1}$ be the continuous family that realizes the
     flip over $\rho$. For $\eps > 0$ sufficiently small, every colorful
     $d$-simplex $\sigma$ that contains $\rho$ is hitting for $\f_{-\eps}$ if
     and only if it is not hitting for $\f_{+\eps}$.  Hence, the colorful
     simplices in the symmetric difference $\Hit \Symm \Hit'$ are exactly the
     colorful $d$-simplices that contain $\rho$.
\end{proof}

The benefit of working with flips is that we can pass from one colorful
configuration to another.

\begin{prop}\label{prop:connectedflipgraph}
    Any two centered colorful $\n$-configurations in relative general position
    are connected by a sequence of flips.
\end{prop}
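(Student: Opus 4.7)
The plan is to view the space of centered colorful $\n$-configurations as a real semi-algebraic subset $X$ of $(\R^d)^{|\D|}$ and to connect two configurations in relative general position by a generic path that only crosses codimension-$1$ strata of the ``bad locus'', each crossing being a flip. Since the centering condition is imposed separately on each color class, $X$ factors as $X = X_0 \times \cdots \times X_d$ where $X_i$ is the semi-algebraic space of $n_i$-tuples of labeled points in $\R^d$ with $0$ in the relative interior of their convex hull.

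First I would show that $X$ is path-connected. Each $X_i$ is path-connected by a standard deformation argument: given a centered configuration of $n_i \ge 2$ labeled points, one can continuously deform it to a fixed reference configuration (for instance, $\min(n_i, d+1)$ of the points forming an affinely independent set whose convex hull contains $0$ in its relative interior, and any remaining points placed in this relative interior) while keeping $0$ in the relative interior of the convex hull throughout. Being a product of path-connected spaces, $X$ is path-connected.

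Next, for each colorful simplex $\tau \in \Rain$ of dimension $k \le d - 1$, the bad set
\[
    B_\tau \ := \ \{ \f \in X : 0 \in \f(\tau) \}
\]
is a semi-algebraic subset of codimension $d - k$ in $X$, since the condition $0 \in \aff \f(\tau)$ generically cuts out codimension $d - k$ and the further requirement $0 \in \conv \f(\tau)$ is an open condition within that locus. Write $W := \bigcup_{\dim \tau = d-1} B_\tau$ for the union of the codimension-$1$ \emph{walls} and $S := \bigcup_{\dim \tau \le d-2} B_\tau$ for the higher-codimension remainder. Configurations in relative general position are precisely those in $X \setminus (W \cup S)$.

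Given two such configurations $\f, \f'$, path-connectedness furnishes a continuous path $\gamma : [0,1] \to X$ between them. By standard transversality for semi-algebraic stratifications, $\gamma$ can be perturbed relative to its endpoints to a path $\gamma'$ that avoids the codimension-$\ge 2$ locus $S$ entirely and meets $W$ transversally at finitely many interior times $t_1 < \cdots < t_m$, each crossing a single wall $B_{\tau_j}$ for a uniquely determined ridge $\tau_j \in \Rain$ of dimension $d - 1$. Choosing intermediate regular times $0 = s_0 < t_1 < s_1 < \cdots < s_m = 1$ with $\gamma'(s_j) \in X \setminus (W \cup S)$ and rescaling each subinterval $[s_{j-1}, s_j]$ to $[-1, 1]$, the restriction of $\gamma'$ realizes a flip over $\tau_j$ between $\gamma'(s_{j-1})$ and $\gamma'(s_j)$; $\tau_j$ is the unique minimal face with $0$ in its relative image precisely because $\gamma'$ avoids $S$. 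Concatenation yields the desired finite sequence of flips from $\f$ to $\f'$. The main obstacle is the transversality step: arranging a single perturbation of $\gamma$ that simultaneously avoids all codimension-$\ge 2$ strata among the $B_\tau$ while meeting each codimension-$1$ wall transversally and staying inside the semi-algebraic set $X$. This is handled by standard Thom/Whitney-type transversality for semi-algebraic stratifications.
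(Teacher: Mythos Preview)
Your argument is correct and follows the same strategy as the paper: the space of centered $\n$-configurations is path-connected, the locus where a flip is not well-defined has codimension at least $2$, and a generic path therefore crosses only codimension-$1$ walls, each crossing being a flip. The paper's execution differs slightly from yours. Rather than splitting the bad locus into your $S$ (origin in a colorful simplex of dimension $\le d-2$) and wall-intersections, the paper directly shows that the set of configurations with $0$ lying in \emph{two} distinct colorful $(d-1)$-simplices has codimension $2$; your $S$ is contained in this set (any $\tau$ of dimension $\le d-2$ extends to at least two colorful ridges), so the paper's single codimension computation subsumes both your $S$ and the wall-intersections you handle via transversality. On the other hand, your write-up is more explicit on two points the paper leaves implicit: the path-connectedness of the space of centered configurations (for which the cleanest argument is that each factor $X_i$ is star-shaped around the all-zero tuple, since $0\in\relint\conv\{p_j\}$ is preserved under scaling), and the perturbation step that turns an arbitrary path into a finite sequence of flips.
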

\begin{proof}
    \newcommand\F{\mathbf{F}}%
    Let us denote by $\F \subset \{ \f : \D \rightarrow \R^d \}$ the space of
    all centered colorful configurations. This is an open semialgebraic set of
    full dimension $d (n_0 + \cdots + n_d)$.  The set $X \subset \R^{\D}$ of
    colorful configurations $\f$ such that $0 \in \f(\rho_1) \cap \f(\rho_2)$
    for two distinct colorful $(d-1)$-simplices $\rho_1, \rho_2 \in \Rain$ is
    of codimension $2$.  Indeed, if we forget about containment and only
    require that $0 \in \aff \f(\rho_1) \cap \aff \f(\rho_2)$, then the
    dimension can only increase.  For two fixed $\rho_1$ and $\rho_2$, pick
    $a_1 \in \rho_1 \setminus \rho_2$ and $a_2 \in \rho_2 \setminus \rho_1$.
    We can choose $\f(p) \in \R^d$ freely for any $p \in \D \setminus \{
    a_1,a_2\}$ and we have to choose $\f(a_i)$ in the linear span of $\{ f(p)
    : p \in \rho_i, p \neq a_i \}$, which is a linear space of dimension at
    most of dimension $d-2$. This yields $d (n_0 + \cdots + n_d) - 2$ degrees
    of freedom and $X$ is contained in a finite union of these sets as
    $\rho_1$ and $\rho_2$ vary over all distinct $(d-1)$-simplices of $\Rain$.
    It follows that  $\F \setminus X$ is path connected, which proves the
    claim.
\end{proof}

The crucial insight now is that $\betti_{d-1}(\Av(\f))$ is invariant as $\f$
undergoes a flip. 

\begin{lem}\label{lem:flip}
    Let $\f \flip \f'$ be two centered colorful configurations in general
    position that are related by a flip in $\rho$. If $\beta_{d-1}(\Avf{\f}) =
    1$, then $\beta_{d-1}(\Avf{\f'}) = 1$.
\end{lem}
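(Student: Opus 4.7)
The plan is to apply Corollary~\ref{cor:two_hitting}: combined with Proposition~\ref{prop:betti_pos}, the claim $\betti_{d-1}(\Avf{\f'})=1$ reduces to showing that every pair $\eta_1,\eta_2\in\Hit(\f')$ admits a chain $B'\in\Chain{d}{\Avf{\f'}}$ with $\partial B'=\partial\eta_1+\partial\eta_2$. Let $i$ be the color class missing from $\rho$; enumerate the colorful $d$-simplices containing $\rho$ as $\sigma_j=\rho\cup\{v_j\}$ for $v_j\in\D_i$, and write $A=\{j:\sigma_j\in\Hit(\f')\}$, $A'=\{j:\sigma_j\in\Hit(\f)\}$, and $H^*=\Hit(\f)\cap\Hit(\f')$. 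By Proposition~\ref{prop:combinatorial_flip}, $A$ and $A'$ partition $\{1,\ldots,n_i\}$ and $\Hit(\f')=H^*\cup\{\sigma_j:j\in A\}$.

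The key ingredient is a family of explicit $d$-cycles of $\Rain$ coming from its join structure $\Rain=\D_0\join\cdots\join\D_d$: for any choice of two distinct vertices $a_r^0\neq a_r^1$ in each $\D_r$, the chain
\[
    z \ := \ \sum_{\eps\in\{0,1\}^{d+1}}\{a_0^{\eps_0},\ldots,a_d^{\eps_d}\}
\]
is a $d$-cycle, since every colorful $(d-1)$-face occurs for exactly two values of $\eps$. For $j\in A$ and $j'\in A'$, take $a_i^0=v_j$, $a_i^1=v_{j'}$, and in each other color $i_k$ take $a_{i_k}^0=u_k\in\rho$ together with $a_{i_k}^1\neq u_k$ arbitrary (possible since $n_{i_k}\geq 2$). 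The resulting cycle $z_{j,j'}$ contains precisely $\sigma_j$ and $\sigma_{j'}$ among the $\sigma_\ell$; its remaining $2^{d+1}-2$ simplices all avoid $\rho$, because each of them has some $x_k=a_{i_k}^1\neq u_k$.

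To produce $B'$, I first treat $\eta_1,\eta_2\in H^*$: the hypothesis and Corollary~\ref{cor:two_hitting} supply $B\in\Chain{d}{\Avf{\f}}$ with $\partial B=\partial\eta_1+\partial\eta_2$, and writing $B=B_0+\sum_{j\in A}a_j\sigma_j$ with $B_0$ free of $\sigma_\ell$, adding each $a_j z_{j,j'}$ for a fixed $j'\in A'$ eliminates the offending $\sigma_j\notin\Avf{\f'}$ at the cost of introducing $\sigma_{j'}\in\Avf{\f'}$ and $\rho$-avoiding simplices lying either in $K:=\Avf{\f}\cap\Avf{\f'}$ or in $H^*$; the $H^*$-terms are then absorbed by reapplying the hypothesis. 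When $\eta_i=\sigma_j$ with $j\in A$ (so $\eta_i\notin\Hit(\f)$), a single application of $z_{j,j'}$ exchanges $\sigma_j$ for $\sigma_{j'}\in\Hit(\f)$ modulo a chain in $C_d(\Avf{\f'})$, reducing to the previous case.

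The main obstacle I foresee is to make the iterative absorption of $H^*$-contributions rigorous and verify that it terminates. The cleanest way is likely to recast the argument algebraically: the long exact sequences of $(\Avf{\f},K)$ and $(\Avf{\f'},K)$ identify both $\Hom{d-1}{\Avf{\f}}$ and $\Hom{d-1}{\Avf{\f'}}$ as quotients of $\Hom{d-1}{K}$ by the images of the respective connecting maps. The relations $[\partial\sigma_j]\equiv[\partial\sigma_{j'}]$ (modulo boundaries involving $H^*$) produced by the $z_{j,j'}$, combined with the hypothesis $\betti_{d-1}(\Avf{\f})=1$, should force both quotients to be $\Z_2$ simultaneously, bypassing the explicit iteration entirely.
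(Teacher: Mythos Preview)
Your plan follows the same broad outline as the paper's argument (reduce via Corollary~\ref{cor:two_hitting}, build crosspolytope-type $d$-cycles containing the two hitting simplices, handle the three cases), but there is a genuine gap at the point you yourself flag.

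The difficulty is that your cycles $z_{j,j'}$ are built with \emph{arbitrary} second vertices $a_{i_k}^1$. Consequently, the $2^{d+1}-2$ ``other'' simplices of $z_{j,j'}$ can land in $H^\ast=\Hit(\f)\cap\Hit(\f')$, and your scheme of ``absorbing'' these $H^\ast$-terms by reapplying the hypothesis is circular: each application of the hypothesis produces a chain in $\Chain{d}{\Avf{\f}}$, which after your substitution again contains $H^\ast$-terms, and there is no mechanism that forces the process to terminate. Your algebraic reformulation does not rescue this. With $V=\Hom{d-1}{K}$, $W=\langle[\partial\tau]:\tau\in H^\ast\rangle$, $W_A=\langle[\partial\sigma_j]:j\in A\rangle$, $W_{A'}=\langle[\partial\sigma_{j'}]:j'\in A'\rangle$, your relations only say that all $[\partial\sigma_j]$ are congruent modulo $W$; together with $V/W_A\cong\Z_2$ and $W+W_A+W_{A'}=V$ this does \emph{not} force $V/W_{A'}\cong\Z_2$. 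For instance, take $V=\Z_2^3$, $W=\langle e_1,e_2\rangle$, $W_A=\langle e_3,e_1+e_3\rangle=\langle e_1,e_3\rangle$, $W_{A'}=\langle e_3\rangle$: all hypotheses hold, yet $V/W_{A'}\cong\Z_2^2$.

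What is missing is precisely the geometric input the paper uses in its ``New/New'' case: at the critical time $t=0$ the hyperplane $H=\mathrm{span}\,\f_0(\rho)$ lets one choose each $b_i\in\D_i$ with $\f_0(b_i)$ on the side $H^+$ containing $\f_0(a_0),\f_0(b_0)$. With this choice every simplex of the crosspolytope $\{a_0,b_0\}\ast\cdots\ast\{a_d,b_d\}$ other than $\eta_1,\eta_2$ is avoiding already for $\f_0$, hence lies in $K=\Avf{\f}\cap\Avf{\f'}$; no $H^\ast$-terms appear at all. This makes the New/New case clean, and the Old/Old and Old/New cases then reduce to it via a cochain/parity argument on $\rho^\ast$. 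Your argument becomes correct once you replace the arbitrary $a_{i_k}^1$ by these geometrically chosen $b_i$.
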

\begin{proof}
    Let $\Av, \Av'$ be the avoiding complexes of $\f$ and $\f'$, respectively,
    and write $\Hit$ and $\Hit'$ for their hitting sets. In light of
    Corollary~\ref{cor:two_hitting}, we want to show that for any $\eta_1,
    \eta_2 \in \Hit'$ there is a $d$-chain $B' \in \Chain{d}{\Av'}$ such that
    $\partial \eta_1  + \partial \eta_2  = \partial B'$.  For the proof, let
    us call a hitting simplex $\eta$ \emph{old} if $\eta \in \Hit \cap \Hit'$
    and \emph{new} if $\eta \in \Hit' \setminus \Hit$. By
    Proposition~\ref{prop:combinatorial_flip}, a hitting simplex $\eta$ is new
    if and only if $\rho \subseteq \eta$.  Without loss of generality, $\rho =
    \{ a_1, a_2, \dots, a_d\}$ with $a_i \in \D_i$ for $1 \le i \le d$. That
    is, $\rho$ is a colorful ridge that avoids $\D_0$. Our proof distinguishes
    three cases, depending on whether $\eta_1$ and $\eta_2$ are new or old. It
    follows from the (polyhedral) Morse Lemma that only new hitting simplices
    can create homology and, strictly speaking, we only need to consider the
    `new/new' case. However, to have a self-contained proof, we include all
    cases.

    \emph{New/New:} Assume that $\eta_1,\eta_2\in \Hit'\setminus\Hit$. Then
    there are $a_0,b_0 \in \D_0$ such that $\eta_1 = a_0 \join \rho = \{
    a_0,a_1,\dots, a_d\}$ and $\eta_2 = b_0 \join \rho$. Let us consider the
    critical configuration $\f_t$ at time $t=0$.  Let $H$ be the linear
    hyperplane spanned by $\f_0(\rho)$ and denote by $H^+$ the closed halfspace
    containing $\f_0(a_0)$ and $\f_0(b_0)$. The points must lie in the same
    halfspace, because $\eta_1$ and $\eta_2$ are hitting in $\f_\eps$ for
    $\eps > 0$.  Since $\f$ is centered, there are $b_i \in \D_i$ such that
    $\f_0(b_i) \in H^+$ for all $1 \le i \le d$. Define the subcomplex 
    \[
        \Oct(\eta_1,\eta_2) \ := \ \{a_0,b_0\} \join \{a_1,b_1\} \join \cdots
        \join \{a_d,b_d\} \ \subset \ \Rain,
    \]
    which, as an abstract simplicial complex, is the boundary of a
    $(d+1)$-dimensional crosspolytope.  Except for $\eta_1$ and $\eta_2$, every
    simplex $\sigma\in \Oct(\eta_1,\eta_2)$ belongs to $\Av \cap \Av'$.
    Indeed, they are avoiding already in $\f_0$ by construction, because they
    lie all in the same linear halfspace and only $\rho$ contains $0$ in its
    relative interior. Moreover, every avoiding simplex of $\f_0$ is also an
    avoiding simplex of $\f_t$ for $-1\leq t\leq 1$, since only the simplices
    that contain $\rho$ as a face change their status in the course of the
    flip.  We call the $d$-chain
    \[
      \Zil(\eta_1,\eta_2) \ := \ \sum\{ \sigma : \sigma \in \Oct(\eta_1,\eta_2)
      \setminus \{ \eta_1, \eta_2 \}, \dim(\sigma)=d \} \ \in \
      \Chain{d}{\Av'}
    \] 
    a \Defn{cylinder} of $\eta_1$ and $\eta_2$ and we observe that 
    \[
        0 \ = \ \partial \Oct(\eta_1,\eta_2) \ = \ \partial
        \Zil(\eta_1,\eta_2) + \partial \eta_1 + \partial \eta_2,
    \]
    which shows that $\partial \eta_1 + \partial \eta_2$ is a boundary and
    finishes the first case. \hfill $\diamond$

    \emph{Old/Old:} Assume that  $\eta_1,\eta_2\in \Hit'\cap\Hit$ and thus
    $\rho$ is not a face of neither $\eta_1$ nor $\eta_2$. Since $\eta_1$ and
    $\eta_2$ are hitting for $\f$, the assumption and
    Corollary~\ref{cor:two_hitting} assure us that there is a $d$-chain $B \in
    \Chain{d}{\Av}$ such that 
    \begin{equation}\label{eq:old_chain}
        \partial \eta_1 + \partial \eta_2 \ = \ \partial B \ = \ \partial
        \left(\sum_{\sigma\in B\cap \Av'}\sigma + \sum_{\sigma\in B\cap
        \Hit'}\sigma\right).
    \end{equation}
    We claim that the number of simplices in $B\cap \Hit'$ is even. To this
    end, consider the cochain $\rho^\ast : \Chain{d-1}{\Rain}\to \Z_2$ with
    $\rho^\ast(\rho) = 1$ and $=0$ otherwise. The coboundary $\delta \rho^\ast
    = \rho^\ast \circ \partial$ satisfies that for a colorful $d$-simplex $\sigma
    \in \Rain$
    \[
        \delta \rho^\ast (\sigma) \ = \ 1 \quad \text{ if } \rho \subset
        \sigma \qquad \text{ and } \qquad \delta \rho^\ast (\sigma) \ = \ 0
        \quad \text{ otherwise. }
    \]
    Since neither $\eta_1$ nor $\eta_2$ contain $\rho$, we compute
    from~\eqref{eq:old_chain}
    \[
        0 \ = \ \rho^\ast(\partial \eta_1 + \partial \eta_2)  \ = \ 
        \delta \rho^\ast(B) \ = \ 
        \sum_{\sigma\in B\cap \Av'}\delta \rho^\ast(\sigma) +\sum_{\sigma\in
     B\cap \Hit'}\delta \rho^\ast(\sigma).
    \]
    Because $\delta \rho^\ast$ evaluates to $0$ on the simplices of $B\cap
    \Av'\subseteq\Av\cap \Av'$ and to $1$ on the simplices of
    $B\cap\Hit'\subset\Hit'\setminus \Hit$
    (cf.~Proposition~\ref{prop:combinatorial_flip}), this shows that $|B\cap
    \Hit'|$ is even. Let $B\cap \Hit' = \{ \sigma_1,\dots,\sigma_{2k} \}$.
    Since each $\sigma_i$ is a \emph{new} hitting simplex, we can use the
    previous case to find cylinders $\Zil(\sigma_{2i-1},\sigma_{2i}) \in
    \Chain{d}{\Av'}$ such that 
    \[
        \partial \sum_{\sigma\in B\cap \Hit'} \sigma \ = \ \sum_{i=1}^k
        \partial(\sigma_{2i-1} + \sigma_{2i}) \ = \ \sum_{i=1}^k \partial
        \Zil(\sigma_{2i-1},\sigma_{2i}),
    \]
    which together with~\eqref{eq:old_chain} shows that $\partial \eta_1 +
    \partial \eta_2$ is a boundary in $\Av'$. \hfill $\diamond$

    \emph{Old/New:}  Assume that $\eta_1\in \Hit'\cap\Hit$ and $\eta_2\in
    \Hit'\setminus\Hit$ and thus $\rho \nsubseteq \eta_1$ but $\rho \subseteq
    \eta_2$. Again, let $a \in \D_0$ such that $\eta_2 = a \join \rho$.  Let
    $H$ be the linear hyperplane parallel to $\aff(\f'(\rho))$.  Since $\f'$
    is in relative general position, $\f'(\eta_2)$ is a $d$-simplex and $H$
    separates $\f'(a)$ from $\f'(\rho)$. By the centeredness, there must be
    some $b \in \D_0$ such that $H$ separates $\f'(a)$ and $\f'(b)$. In
    particular, the colorful simplex $\eta_2' := b \join \rho$ is contained in
    $\Hit\setminus\Hit'$; that is, $\eta_2'$ an element of $\Av'$
    containing~$\rho$. 

    Hence, $\eta_1, \eta_2'$ is a pair of hitting simplices for $\f$ and,
    by the assumption and Corollary~\ref{cor:two_hitting}, there is a chain $B
    \in \Chain{d}{\Av}$ such that $\partial \eta_1 + \partial \eta_2'  =
    \partial B$.  Define $B' := B + \eta_2 + \eta_2'$. This is a chain
    in $\Chain{d}{\Rain}$ and
    \[
        \partial B' \ = \ \partial B + \partial \eta_2 + \partial \eta_2' \ =
        \ \partial \eta_1 + \partial \eta_2.
    \]
    Using the cochain $\rho^\ast$, we observe that $\rho^\ast( \partial \eta_1
    + \partial \eta_2') = 1$ and hence $B \cap \Hit'$ contains an odd number
    of elements. Since $\eta_2\in\Hit'$ and $\eta_2'\notin\Hit'$, it follows that
    $B'=B+\eta_2+\eta_2'$ contains an even number of elements in $\Hit'$ and, by pairing them
    up with cylinders, there is some $B'' \in \Chain{d}{\Av'}$ such that $\partial B' =
    \partial B''$. 
\end{proof}

\begin{proof}[Proof of Theorem~\ref{thm:betti}]
    By Lemma~\ref{lem:flip} and Proposition~\ref{prop:connectedflipgraph}, it
    suffices to produce a single instance of a centered colorful configuration
    in relative general position for every $\n$ whose avoiding complex
    satisfies $\betti_{d-1}(\Av)=1$.

    Note that the centered configurations of Example~\ref{ex:bound} are in
    relative general position.  From the description of the hitting simplices
    in Example~\ref{ex:bound}, we can get their avoiding complexes. For $0 \le
    i \le d$, let $\D_i$ be the $0$-dimensional complex with vertices $a^i,
    b^i_1,\dots,b^i_{n_i-1}$.  Then $\sigma \in \Rain(\n) = \D_0 \join \cdots
    \join \D_n$ is an avoiding $d$-simplex if and only if it contains at least
    one $a^i$ and at least one $b^j_{l}$ for $i \neq j$ and $1 \le l < n_j$.
    We call the vertices $a^i$ \Defn{special} and we set $\Av_{d+1}$ to be the
    boundary complex of the hitting simplex $\{a^0,\dots,a^d\}$. We will show
    that $\Av$ is homotopy equivalent to $\Av_{d+1}$ and hence $\Hom{*}{\Av} =
    \Hom{*}{\Sphere^{d-1}}$.

    Recall that if $\sigma$ is an inclusion-maximal face of a simplicial
    complex $\Delta$, and $\tau \subset \sigma$ is a face with $\dim \tau <
    \dim \sigma$ that is not contained in any other inclusion-maximal face of
    $\Delta$, then $\tau$ is called a \Defn{free face} of $\sigma$. If $\tau$
    is a free face of $\sigma$, the removal of all the simplices $\tau
    \subseteq \gamma \subseteq \sigma$ of $\Delta$ is called a
    \Defn{collapse}, and denoted by $\Delta \searrow \Delta'$, where $\Delta'
    = \Delta \setminus \{\gamma: \tau \subseteq \gamma \subseteq \sigma\}$. We
    will use the same notation to denote a sequence of collapses. If $\Delta
    \searrow \Delta'$ then $\Delta$ and $\Delta'$ are homotopy equivalent;
    cf.~\cite[Sect.~11.1]{TopMeth}. 

    For $1 \le k \le d$, let $\Av_k \subseteq \Av$ be the subcomplex generated
    by the $d$-simplices with at least $k$ special vertices.  We will show
    that 
    \begin{equation}\label{eq:collapses}
    \Av \ = \ \Av_1 \ \searrow \
    \Av_2 \ \searrow \ \cdots \ \searrow \ \Av_d \ \searrow \ \Av_{d+1}
    \ \cong \ \Sphere^{d-1}.
 \end{equation}

    For $1 \le k \le d-1$, let $\sigma \in \Av_k$ be a $d$-simplex containing
    exactly $k$ special vertices and let $\tau \subset \sigma$ be any ridge
    containing $k-1$ special vertices. Since every $\D_i$ contains exactly one
    special vertex and $\tau$ misses only one color class $\D_i$, $\tau$ is a
    free face of $\sigma$. Hence, it can be collapsed. Continuing this way
    shows $\Av_k \searrow \Av_{k+1}$. In the last step, the simplices $\sigma
    \in \Av_d$ contain a single non-special vertex, which is itself a free
    face of $\sigma$ and can be collapsed. What remains are the
    $(d-1)$-simplices spanned by special vertices, which is exactly
    $\Av_{d+1}$.
\end{proof}

 \begin{figure}[htpb]
 \includegraphics[width=.3\linewidth]{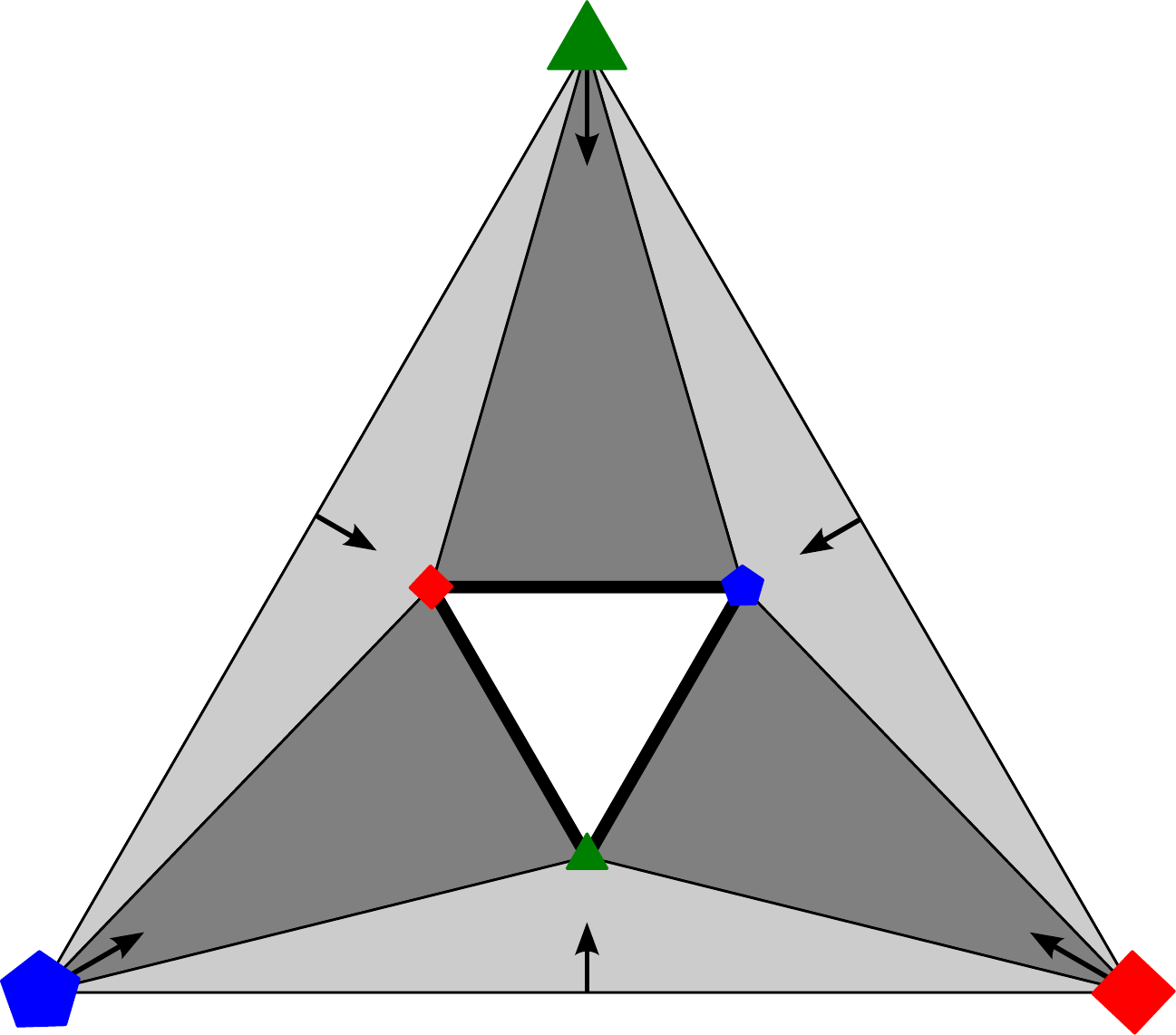}
 \caption{Scheme of $\Av$ for Example~\ref{ex:bound} for $d=2$. The large nodes 
 on the outer triangle should be thought of as representatives of the non-special 
 vertices of that color. Since no simplex of $\Av$ has two vertices of the same color, 
 this depicts the whole complex. Lightly shaded triangles belong to $\Av_1$ and darkly shaded triangles to $\Av_2$. 
 The boundary of the interior triangle, drawn thicker, is $\Av_3$. The collapses for the proof of Theorem~\ref{thm:betti} are also depicted.}
 \end{figure}

The Universal Coefficient Theorem together with Hurewicz' theorem
(cf.~\cite{Hatcher}) shows that all homotopy groups of $\Av(\f)$ up to
dimension $d-1$ agree with that of the $(d-1)$-sphere. 
\begin{Conj}
    For a centered colorful configuration $\f$ in relative general position,
    the avoiding complex $\Av(\f)$ $d$-collapses (in the sense of
    Wegner~\cite{Weg}) onto the boundary of a hitting simplex.
\end{Conj}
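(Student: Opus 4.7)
The plan is to strengthen the proof of Theorem~\ref{thm:betti} from a homological statement into a combinatorial one, following the same two-step structure: first establish the conclusion on the distinguished configurations of Example~\ref{ex:bound}, and then propagate it to any centered colorful configuration in relative general position via flips, using Proposition~\ref{prop:connectedflipgraph}. At each stage, instead of tracking only the reduced Betti number $\betti_{d-1}(\Avf{\f})$, one would track an explicit $d$-collapsing sequence terminating at the boundary of a hitting simplex.

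The base case comes essentially for free. The chain $\Av = \Av_1 \searrow \Av_2 \searrow \cdots \searrow \Av_{d+1}$ constructed inside the proof of Theorem~\ref{thm:betti} already consists of elementary collapses along free ridges of $d$-simplices, i.e., pairs $(\tau,\sigma)$ with $\dim\tau=d-1$ and $\dim\sigma=d$, which are $d$-collapses in Wegner's sense. The terminal complex $\Av_{d+1}$ is precisely the boundary of the hitting simplex $\{a^0,\dots,a^d\}$, so the conjecture holds on the base configurations.

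For the flip step, I would prove the following claim along any sequence of flips: if $\f \flip \f'$ and $\Avf{\f}$ $d$-collapses onto $\partial \eta_0$ for some $\eta_0\in\Hit(\f)$, then $\Avf{\f'}$ $d$-collapses onto $\partial \eta_0'$ for some $\eta_0'\in\Hit(\f')$. By Proposition~\ref{prop:combinatorial_flip}, the two avoiding complexes differ only in the $d$-simplices containing $\rho$: the simplices in $A := \Hit(\f)\setminus\Hit(\f')$ lie only in $\Avf{\f'}$, while those in $B := \Hit(\f')\setminus\Hit(\f)$ lie only in $\Avf{\f}$. The plan is to build the collapsing of $\Avf{\f'}$ in three stages: (i) as a preamble, pair each new $d$-simplex $\sigma\in A$ with a companion in $\Avf{\f}\cap\Avf{\f'}$ via the cylinder $\Oct(\sigma,\sigma')$ from the proof of Lemma~\ref{lem:flip}, and $d$-collapse these cylinders in the order supplied by a shelling of the boundary of the $(d+1)$-crosspolytope, so that afterwards the complex matches $\Avf{\f}$ away from $B$; (ii) execute the assumed $d$-collapse of $\Avf{\f}$ onto $\partial\eta_0$, with cylinder-based modifications to bypass the missing simplices of $B$; (iii) if $\eta_0\notin\Hit(\f')$, perform one final cylinder swap from $\partial \eta_0$ to $\partial \eta_0'$ for some $\eta_0'\in\Hit(\f')$.

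The main obstacle is that each cylinder $\Oct(\sigma,\sigma')$ is a $d$-chain rather than a prescribed sequence of elementary collapses, so its constituent moves must be ordered so that a free ridge is available at every step. Since $\Oct(\sigma,\sigma')$ is the boundary of a $(d+1)$-crosspolytope, and hence shellable, the natural candidate is a shelling beginning with $\sigma$ and its companion. Verifying that this shelling remains compatible with the ambient complex---i.e., that no ridge which was free in $\Avf{\f}$ gets blocked by a new $d$-simplex of $A$, and that steps of the inherited sequence involving simplices of $B$ can be safely excised or replaced---is the combinatorial core of the argument. A secondary difficulty is coordinating different cylinders when they share faces, which likely requires a careful global choice of companions $\sigma'$ for the simplices of $A$. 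The polyhedral Morse lemma already invoked in the proof of Lemma~\ref{lem:flip} should reduce the essential case to the ``new/new'' situation, since only new hitting simplices create genuinely new collapsing obstructions; however, lifting that reduction from chains to elementary collapsing sequences is precisely the step where new ideas appear necessary.
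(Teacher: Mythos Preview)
This statement is presented in the paper as an open \emph{conjecture}, not a theorem; the paper offers no proof. So there is nothing to compare against, and your proposal is an attempt at an open problem.

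Your base case is sound and essentially already in the paper: the chain $\Av_1\searrow\cdots\searrow\Av_{d+1}$ in the proof of Theorem~\ref{thm:betti} consists of ordinary elementary collapses along $(d-1)$-dimensional free faces, hence $d$-collapses in Wegner's sense, and terminates at the boundary of the special hitting simplex.

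The flip step, however, is not a proof but a program---and you say as much in your last sentence. The homological argument of Lemma~\ref{lem:flip} works because chains can be added freely over $\Z_2$: the cylinders $\Zil(\cdot,\cdot)$ witness identities in $\Chain{d}{\Av'}$ with no ordering or freeness constraints. A $d$-collapsing sequence is a different kind of object: an ordered list of moves, each requiring a face of dimension $<d$ lying in a \emph{unique} maximal face of the \emph{current} complex. When a flip removes the simplices of $B=\Hit(\f')\setminus\Hit(\f)$ from $\Av(\f)$ and inserts those of $A=\Hit(\f)\setminus\Hit(\f')$, both directions damage an inherited collapsing sequence: a step may have collapsed a simplex now in $B$ (so it is simply absent), or may have relied on a ridge being free that is now covered by a new simplex in $A$. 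Your remedy---peel off $A$ first via a shelling of each $\Oct(\sigma,\sigma')$---does not transfer to the ambient complex: a ridge that is free inside the crosspolytope boundary $\Oct(\sigma,\sigma')$ need not be free in $\Av(\f')$, since it can be contained in $d$-simplices of $\Av(\f')$ outside the cylinder (any colorful $d$-simplex sharing $d$ vertices with $\Oct(\sigma,\sigma')$ will do). Coordinating several overlapping cylinders only compounds this. As written, the proposal is a reasonable plan of attack that correctly localizes the difficulty, but it does not close the conjecture; the step you flag as needing ``new ideas'' is exactly the reason the statement is left open in the paper.
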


For the Colorful Carath\'eodory Theorem, the assumption of \emph{centeredness}
can be replaced by a weaker assumption (see, for
example,~\cite{ArochaBaranyBrachoFabilaMontejano2009, bar82,
HolmsenPachTverberg2008, MeunierDeza2013}). This prompts the following
question.
\begin{quest}\label{quest:weaker}
    Under which conditions other than centeredness is $\betti_{d-1}(\Av) = 1$?
\end{quest}

\section{From colorful configurations to Minkowski sums}
\label{sec:cayley}
\newcommand\I{\D}%
\newcommand\A{A}%
\newcommand\G{G}%
\newcommand\wh{\widehat}%
In this section we will show how to use Gale duality and the Cayley trick
to translate between colorful configurations and Minkowski sums. Although we
are mainly interested in the case of polytopes, our setup works with arbitrary
point configurations (not necessarily in convex position).  Let $\I$ be a
finite set. A \Defn{point configuration} in $\R^d$ indexed by $\I$ is a map
$\A : \I \rightarrow \R^d$. Note that our point configurations can have
multiple points. A subset $\U \subseteq \I$ is a \Defn{face} of $\A$ if there
is an affine function $\ell : \R^d \to \R$ such that $\ell(A(v)) \ge 0$ and
$>0$ if and only if $v \not\in \U$. That is, we identify faces of the convex
hull $\conv \A(\I)$ with the indices of points contained in them.

If we further assume that the affine span of $A(\I)$ is full-dimensional, then
$\A$ induces a surjective linear map $\wh A : \R^\I \rightarrow \R^{d+1}$ by
sending $e_v \mapsto (A(v),1)$ for $v \in \I$ and extending by linearity. The
map $\wh A$ has a (possibly trivial) kernel $\ker(\hat A) \cong \R^{|\I|-d-1}$
and let $\iota :\ker(\wh A) \hookrightarrow \R^{\I}$ denote the inclusion map.
The adjoint to $\iota$ is a linear map
\[
    \iota^\ast : (\R^{\I})^\ast \twoheadrightarrow (\ker(\wh A))^\ast.
\]
Identifying $(\ker(\wh A))^\ast$ with $\R^{|\I|-d-1}$, we get another
configuration $\G : \I \rightarrow \R^{|\I|-d-1}$, the \Defn{Gale
transform}\footnote{Actually, $\G$ is typically treated as a vector
configuration.} of $\A$ given by $G(v) = \iota^*(e_v^*)$ for $v \in I$.

\begin{lem}[Gale duality]\label{lem:gale}
    Let $\A : \I \rightarrow \R^d$ be a (full-dimensional) point configuration and $\G : \I
    \rightarrow \R^{|\I|-d-1}$ a Gale transform of $\A$. Then $\U \subseteq
    \I$ is a face of $\A$ if and only if
    \[
        0 \in \relint \conv( G(i) : i \in \I \setminus \U ).
    \]
\end{lem}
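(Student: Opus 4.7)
The plan is to translate both sides of the claimed equivalence into statements about linear functionals on $\R^\I$ and to bridge them using the dual exact sequence associated to $\wh A$. Let me write $e_v^\ast$ for the basis of $(\R^\I)^\ast$ dual to $(e_v)_{v \in \I}$.

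\textbf{Step 1 (Faces as linear functionals on $\R^\I$).} An affine function $\ell : \R^d \to \R$ is of the form $\ell(x) = \langle a,x\rangle + b$, and for any $v\in\I$,
\[
   \ell(\A(v)) \ = \ \langle (a,b),(\A(v),1)\rangle \ = \ \langle (a,b),\wh A(e_v)\rangle \ = \ \wh A^{\ast}(a,b)(e_v).
\]
Hence $\U\subseteq \I$ is a face of $\A$ if and only if there exists some $f \in \im(\wh A^\ast) \subseteq (\R^\I)^\ast$ with $f(e_v)\ge 0$ for all $v$ and $f(e_v)=0$ if and only if $v\in\U$.

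\textbf{Step 2 (Dualizing the exact sequence).} Since $\wh A$ is surjective, the sequence $0 \to \ker(\wh A) \xrightarrow{\iota} \R^\I \xrightarrow{\wh A} \R^{d+1} \to 0$ is short exact. Dualizing yields the short exact sequence
\[
   0 \ \longrightarrow \ (\R^{d+1})^\ast \ \xrightarrow{\wh A^\ast} \ (\R^\I)^\ast \ \xrightarrow{\iota^\ast} \ (\ker(\wh A))^\ast \ \longrightarrow \ 0,
\]
so $\im(\wh A^\ast) = \ker(\iota^\ast)$. Writing any $f\in(\R^\I)^\ast$ as $f = \sum_{v\in\I}\lambda_v e_v^\ast$, the condition $\iota^\ast(f)=0$ reads $\sum_{v\in\I}\lambda_v \G(v) = 0$ since $\G(v)=\iota^\ast(e_v^\ast)$ by definition. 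Combining with Step 1, $\U$ is a face of $\A$ if and only if there exist nonnegative scalars $(\lambda_v)_{v\in \I}$ with $\lambda_v=0 \iff v\in\U$ and $\sum_{v \in \I}\lambda_v \G(v)=0$.

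\textbf{Step 3 (Identifying with the relative interior condition).} It remains to verify that the existence of such $(\lambda_v)$ is equivalent to $0\in\relint\conv(\G(i):i\in\I\setminus \U)$. For the forward direction, note that at least one $\lambda_v$ is positive (otherwise $\U=\I$, which we can handle separately as the improper face), so normalizing the $\lambda_v$ to sum to $1$ expresses $0$ as a strictly positive convex combination of the vectors $\{\G(v):v\in\I\setminus\U\}$, placing $0$ in the relative interior of their convex hull. For the backward direction, a strictly positive convex combination $0=\sum_{v\notin\U}\mu_v \G(v)$ yields the required coefficients by setting $\lambda_v:=\mu_v>0$ for $v\notin\U$ and $\lambda_v:=0$ for $v\in\U$.

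\textbf{Main obstacle.} The only delicate point is the equivalence in Step 3, specifically ensuring that the coefficients can simultaneously be arranged to be strictly positive on all of $\I\setminus\U$ and zero on $\U$: a priori, one Gale dependence could fail to involve every vector outside $\U$. This is precisely why one must use the relative interior (rather than merely $0\in\conv$): strict positivity of every coefficient is exactly the defining property of points in the relative interior of a simplex spanned by the configuration, so the two formulations match on the nose.
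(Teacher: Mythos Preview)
The paper does not prove this lemma; it is stated as a standard result with references to Matou\v{s}ek and Ziegler. Your argument is the standard proof via the dual exact sequence and is correct. One small imprecision: in your ``main obstacle'' paragraph you refer to ``the relative interior of a simplex spanned by the configuration,'' but the $\G(v)$ for $v\notin\U$ need not be affinely independent. The fact you actually need is that for any finite multiset $S$, a point lies in $\relint\conv(S)$ if and only if it is a convex combination of \emph{all} elements of $S$ with strictly positive coefficients; this holds regardless of affine independence and is exactly what your Step~3 uses. Also note the edge case $\U=\I$: by the paper's definition this is a face (witnessed by the zero affine function), while the right-hand side becomes $0\in\relint\conv(\emptyset)$; this is typically handled by convention and is not a defect of your argument.
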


Gale transforms and Gale duality are powerful tools in discrete geometry and
we refer to~\cite[Section~5.6]{Matousek2002} for a very accessible treatment
in terms of matrices and to~\cite[Lecture~5]{ziegler} for its relation to
oriented matroids. In particular, the faces of the polytope $\conv \A(\I)$ are
completely described by the Gale transform $\G$.

Let us assume that $\I = \I_0 \cup \I_1 \cup \cdots \cup \I_\s$ for some $\s >
0$ and $\I_i \cap \I_j = \emptyset$ for $i \neq j$. Then, similar to
Section~\ref{sec:upper}, we consider the point configuration $\A$ as a family of $\s+1$ separate configurations $\A_i:\I_i\to\R^d$. Their \Defn{Minkowski sum}
is the point configuration:
\newcommand\Mink[2]{\sum \nolimits_{#1} {#2}}%
\newcommand\Mk{\mathrm{Mink}}%
\begin{align*}
 \Mink{\I}{\A}:\I_0\times\cdots\times \I_\s &\to  \R^d\\
 (v_0,\dots,v_\s)&\mapsto \A(v_0)+\cdots+\A(v_\s).
\end{align*}
We can restrict this configuration to any subset $\U\subset \I$ with $\U_i = \U \cap \I_i\neq\emptyset$. It is easy to check that every face
of $\sum\A$ is %
of the form $\Mink{\U}{\A}$ for some $\U
\subseteq \I$ such that $\U_i$ is a face of $\A_i$.

\newcommand\Ca[1]{\mathrm{Cay}({#1})}%
\newcommand\CaI[2]{\mathrm{Cay}_{{#1}}({#2})}%
We may adapt the notion of Gale transforms to Minkowski sums by way of Cayley
embeddings. The \Defn{Cayley embedding} of $\A : \I \rightarrow \R^d$ is the
configuration $\CaI{\I}{\A} : \I \rightarrow \R^\s \times \R^d$ that takes
$v_i \in \I_i$ to $(e_i,\A(v_i))$ for $0 \le i \le \s$. Here, $e_0 := 0$ and
$e_1,\dots,e_\s$ is the standard basis of $\R^{\s}$.  Alternatively, you can
also replace $e_0,\dots,e_\s$ with the vertices of any full-dimensional
simplex in $\R^\s$. 

Let $b = \frac{1}{s+1} (e_0 + \cdots + e_\s)$ be the barycenter of
$e_0,\dots,e_\s$ and consider the affine subspace $\Lambda = \{ (x,y) \in \R^s
\times \R^d : x = b \}$. Then it is straightforward to check that 
\[
    \conv( \CaI{\I}{\A})  \cap \Lambda \ \cong \ \conv\left(\Mink{\I} \A\right).
\]

In particular, this induces a bijection between faces of $\CaI{\I}{\A}$ and
$\Mink{\I} \A$.  Cayley embeddings have many favorable properties, in
particular in relation to triangulations and mixed subdivisions;
see~\cite{HRS2000}.

\begin{lem}
    Let $\U=\U_0\cup \cdots\cup \U_s \subseteq \I$ be such that $\U_i \neq
    \emptyset$ for all $0 \le i \le \s$.  Then $\U$ is a face of
    $\CaI{\I}{\A}$ if and only if $\U_0\times \cdots\times \U_s$ is a face of
    $\Mink{\I} \A$.
\end{lem}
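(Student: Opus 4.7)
The plan is to prove both directions by unpacking the definition of \emph{face} in terms of supporting affine functionals, and to observe that the coordinates of the Cayley embedding $(e_i, \A(v))$ neatly decouple the functional into a linear part on $\R^d$ common to all the $\A_i$'s and an affine shift depending only on the index $i$. This is exactly the passage to the barycenter slice $\Lambda$ mentioned just above the lemma.

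For the forward direction, I would start with an affine functional $\ell(x,y) = \alpha \cdot x + \beta \cdot y + \gamma$ on $\R^\s \times \R^d$ witnessing that $\U$ is a face of $\CaI{\I}{\A}$, so $\ell \ge 0$ on the Cayley points with equality exactly on $\U$. Restricting to the layer indexed by $i$, a point $v \in \I_i$ contributes $\ell(e_i, \A(v)) = \beta \cdot \A(v) + c_i$ where $c_i := \alpha \cdot e_i + \gamma$ depends only on $i$. Hence, for each $i$, the function $y \mapsto \beta \cdot y + c_i$ is nonnegative on $\A_i(\I_i)$ with equality exactly on $\U_i$; in particular each $\U_i$ is the $(-\beta)$-minimizing face of $\A_i$. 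Summing over a tuple $(v_0,\dots,v_\s) \in \I_0 \times \cdots \times \I_\s$ then yields $\beta \cdot \Mink{\I}\A(v_0,\dots,v_\s) \ge -\sum_i c_i$ with equality iff $v_i \in \U_i$ for every $i$. This identifies $\U_0 \times \cdots \times \U_\s$ as the face of $\Mink{\I}\A$ supported by $-\beta$.

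For the converse, I would reverse this construction. Given that $\U_0 \times \cdots \times \U_\s$ is a face of $\Mink{\I}\A$, there is some $\beta \in (\R^d)^*$ minimized on the product exactly by the $\U_i$; because the functional $\beta \cdot \Mink{\I}\A$ splits as a sum of one functional per $\A_i$, each $\U_i$ must individually be the $\beta$-minimizing face of $\A_i$. Setting $c_i := -\min_{v \in \I_i} \beta \cdot \A(v)$, defining $\alpha \in (\R^\s)^*$ by $\alpha \cdot e_i = c_i - c_0$ for $1 \le i \le \s$, and $\gamma := c_0$, I then get an affine functional $\ell(x,y) = \alpha \cdot x + \beta \cdot y + \gamma$ that evaluates to $\beta \cdot \A(v) + c_i$ on the Cayley point corresponding to $v \in \I_i$, hence is nonnegative on $\CaI{\I}{\A}(\I)$ with equality exactly on $\U$, certifying that $\U$ is a face of the Cayley embedding.

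No step looks genuinely difficult: everything is a bookkeeping consequence of how the Cayley embedding separates the "$\A$-part" from the "which color" part. The only point that requires the hypothesis $\U_i \neq \emptyset$ is that otherwise the $\beta$-minimizing face of the Minkowski sum would come only from the colors $i$ with $\U_i \neq \emptyset$, while the index set $\U$ would then not determine a single face of the Cayley embedding in a natural way; the nonemptiness ensures the bijection is well defined. I would likely present the argument as a single symmetric statement where the functionals $\ell$ and $\beta$ are interconverted by the formulas above, rather than writing two separate near-identical arguments.
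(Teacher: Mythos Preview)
Your argument is correct. The paper does not actually spell out a proof of this lemma; it simply states it after the observation that $\conv(\CaI{\I}{\A}) \cap \Lambda \cong \conv(\Mink{\I}\A)$, where $\Lambda$ is the affine slice through the barycenter $b$, and the remark that this induces a bijection on faces. Your functional computation is precisely the coordinate unpacking of that geometric statement: the decoupling $\ell(e_i,y)=\beta\cdot y + c_i$ that you exploit is exactly what one sees when a linear functional on the Cayley polytope is restricted to the slice $\Lambda$ (the constants $c_i$ accumulate into the constant term on $\Lambda$, and only $\beta$ survives as the genuine linear part). So your route is the paper's implicit argument made explicit; it is the standard proof of this instance of the Cayley trick, and nothing is missing.
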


Let $\G$ be a Gale transform of $\CaI{\I}{\A}$. By construction, $\G$ is a colorful
point configuration and we refer to it as a \Defn{colorful Gale transform}.
If $\A$ is a configuration of $N = |\I| = |\I_0| + \cdots + |\I_\s|$ points in
$\R^d$, then $\G$ is a colorful configuration of $N$ points in $(N - d - s -
1)$-dimensional space.  Since $\I \setminus \I_j$ is a face for all $0 \le j
\le \s$, it follows from Lemma~\ref{lem:gale} that $0 \in \relint\conv\G(\I_j)$ and hence
$\G$ is actually a centered configuration. This allows us to construct a
dictionary between the facial structure of $\Mink{\I} \A$ and properties of the
colorful configuration $\G$. We start with a characterization that shows that every centered colorful configuration can be thought as a colorful Gale transform.  

A configuration $\G : \I \rightarrow \R^d$ is \Defn{positively spanning} if
$\G(\I)$ is not contained in a closed linear halfspace; and it is
\Defn{positively $\mathbf{2}$-spanning} if for any $v \in \I$, the restriction
of $\G$ to $\I \setminus \{v\}$ is positively spanning.  A configuration $\G'
: \I \to \R^d$ is a \Defn{positive rescaling} of $\G$ if there is $\lambda :
\I \to \R_{>0}$ such that $\G'(v) = \lambda(v) \G(v)$ for all $v \in \I$. We
say that $\G$ and $\G'$ are \Defn{positively equivalent} if they are related
by a positive rescaling. Notice that in this case $\G'$ is centered,
positively spanning or positively $2$-spanning whenever $\G$ is. The Gale
transform $\G$ and $\G'$ of $\A$ and $\A'$ are positively equivalent if and
only if $\A$ and $\A'$ are projectively
isomorphic~\cite[5.4.vi]{Gruenbaum2003}.

The following characterization of colorful Gale transforms is implicit
in~\cite[Prop.~2.9]{NillPadrol2015}.

\begin{lem}\label{lem:Galedualcharacterisation}
    For $\I = \I_0 \cup \cdots \cup \I_\s$.  The Gale transform $\G : \I \to
    \R^{|\I| - d - s - 1}$ of a (full-dimensional) Cayley embedding $\A : \I
    \to \R^{s + d}$ is centered.  Conversely, every
    (full-dimensional) centered configuration is
    positively equivalent to the Gale transform of a Cayley embedding.

    Moreover,  the points $\A(\I_j)$ are in convex position for every $0 \le j
    \le \s$ if and only if $\G$ is positively $2$-spanning.
\end{lem}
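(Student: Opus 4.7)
My plan is to derive all three claims from Gale duality (Lemma~\ref{lem:gale}) applied to the Cayley embedding $A$, leveraging only the special indicator structure of $I = I_0 \cup \cdots \cup I_s$. For the first claim, to show $0 \in \relint \conv G(I_j)$ for each $0 \le j \le s$ I will check that $I \setminus I_j$ is a face of $A$, so Lemma~\ref{lem:gale} delivers the conclusion. An explicit witness is at hand: for $j \ge 1$, the linear functional $\ell(x,y) = x_j$ on $\R^s \times \R^d$ vanishes on $A(I \setminus I_j)$ and equals $1$ on $A(I_j)$; for $j = 0$, I would instead take $\ell(x,y) = 1 - x_1 - \cdots - x_s$. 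In either case $\ell \ge 0$ on $A(I)$ with strict inequality precisely on $I_j$, so $I \setminus I_j$ is a face.

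For the converse, let $G: I \to \R^m$ with $m = |I| - d - s - 1$ be centered and full-dimensional, and the plan is to reverse the Gale construction explicitly. First, I would pick positive $\lambda_j(v) > 0$ with $\sum_{v \in I_j} \lambda_j(v) G(v) = 0$ for each $j$ and rescale to $G'(v) := \lambda_{j(v)}(v) G(v)$, a positive equivalent satisfying $\sum_{v \in I_j} G'(v) = 0$ for every $j$. Set $V := \{\lambda \in \R^I : \sum_v \lambda_v G'(v) = 0\}$; full-dimensionality forces $\dim V = s + d + 1$, and by construction $\mathbf{1}_{I_0}, \ldots, \mathbf{1}_{I_s} \in V$. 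Next I would extend $\{\mathbf{1}, \mathbf{1}_{I_1}, \ldots, \mathbf{1}_{I_s}\}$ to a basis of $V$ by adjoining $d$ vectors $a_1, \ldots, a_d \in V$, define $A_0(v) := (a_1(v), \ldots, a_d(v)) \in \R^d$, and let $A$ be the Cayley embedding of $A_0$. The linear relations among the columns of $\hat A$ are exactly those $\lambda \in \R^I$ with $\sum_{v \in I_j} \lambda_v = 0$ for all $j$ and $\sum_v \lambda_v A_0(v) = 0$; by construction this recovers $V$. Since $G'$ and the Gale transform of $A$ share the same relation space, they coincide up to the linear isomorphism intrinsic in identifying $\ker(\hat A)^*$ with $\R^m$, proving that $G$ is positively equivalent to the Gale transform of $A$.

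For the third claim, I will first note that the points $A(v)$ for $v \in I_j$ live in the parallel affine subspace $\{e_j\} \times \R^d$, so $A(v)$ is a vertex of $\conv A(I)$ if and only if it is a vertex of $\conv A(I_j)$. Therefore $A(I_j)$ is in convex position for every $j$ precisely when every singleton $\{v\} \subseteq I$ is a face of $A$, which by Lemma~\ref{lem:gale} translates to $0 \in \relint \conv G(I \setminus \{v\})$ for each $v$. The final step is to replace relint by interior, and for this I will show that $G(I \setminus \{v\})$ always spans $\R^m$: a functional on $\R^m$ annihilating $G(I \setminus \{v\})$ corresponds under Gale duality to an element $\lambda \in \ker(\hat A)$ supported only at $v$, and the relation $\lambda_v \hat A(v) = 0$ forces $\lambda_v = 0$ because the last coordinate of $\hat A(v)$ equals $1$. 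Consequently $\relint \conv G(I \setminus \{v\}) = \Int \conv G(I \setminus \{v\})$, which contains $0$ if and only if $G(I \setminus \{v\})$ positively spans $\R^m$; quantifying over $v$ yields the equivalence with positive $2$-spanning. The main obstacle is this last spanning argument: it exploits precisely that $\hat A$ comes from a point configuration (its last coordinate is constantly $1$), without which the relint and positive-spanning conditions could diverge for the relevant subconfiguration.
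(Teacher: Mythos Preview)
The paper does not actually supply its own proof of this lemma; it merely remarks that the characterization is implicit in \cite[Prop.~2.9]{NillPadrol2015} and states the result. So there is nothing in the paper to compare against, but your argument can be assessed on its own merits.

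Your treatment of the first and third claims is correct and efficient. For the converse, the overall strategy---rescale so that each color class sums to zero, then reconstruct the Cayley embedding from a basis of the space $V$ of linear relations among the $G'(v)$---is the right one, and the conclusion follows. There is, however, a slip in the penultimate step: the set of linear relations among the columns of $\hat A$ (that is, $\ker \hat A$) is \emph{not} $V$ but its orthogonal complement $V^\perp$. Indeed, the conditions $\sum_{v\in I_j}\lambda_v=0$ and $\sum_v\lambda_v A_0(v)=0$ say precisely that $\lambda$ is orthogonal to each $\mathbf{1}_{I_j}$ and to each $a_k$, hence to all of $V$. What you actually need---and what does hold---is that the \emph{row space} of $\hat A$ equals $V$: the rows of $\hat A$ are $\mathbf{1}_{I_1},\dots,\mathbf{1}_{I_s},a_1,\dots,a_d,\mathbf{1}$, which by construction form a basis of $V$. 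With this correction your final sentence (that $G'$ and the Gale transform of $A$ share the same relation space $V$, hence coincide up to a linear change of coordinates on $\R^m$) goes through as written.
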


As in the case of ordinary Gale duality, the facial structure of the Minkowski
sum of a colorful configuration can be read from the colorful Gale dual. The
following dictionary follows the same reasoning as for classical Gale duality
and we therefore omit the proof.

\begin{prop}[Colorful Gale duality]\label{prop:dict}
    Fix $\I = \I_0 \cup \cdots \cup \I_\s$. Let  $\A : \I \to \R^d$ be a
    configuration such that $\Mink\I\A$ is full-dimensional and
    let $\G : \I \to \R^{|\I| - d - s - 1}$ be a corresponding colorful Gale
    transform. For $\U \subseteq \I$ such that $\U_i \neq \emptyset$ for all
    $0 \le i \le d$ the following holds
    \[
    \Mink{\U}{\A} \text{ is a face of } \Mink{\I}{\A}
        \qquad \text{ if and only if } \qquad
        0 \in \relint \G( \I \setminus \U ).
    \]
\end{prop}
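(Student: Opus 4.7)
The plan is to chain together two tools that are already in place in this section: the Cayley correspondence between faces of $\CaI{\I}{\A}$ and faces of $\Mink{\I}{\A}$ stated in the lemma immediately preceding the proposition, and classical Gale duality (Lemma~\ref{lem:gale}) applied to the Cayley embedding itself. Since $\G$ is defined to be a Gale transform of $\CaI{\I}{\A}$, classical Gale duality directly gives that a subset $\U \subseteq \I$ is a face of $\CaI{\I}{\A}$ if and only if $0 \in \relint\conv\G(\I \setminus \U)$. Under the hypothesis that $\U_i \neq \emptyset$ for every $i$, the Cayley--Minkowski lemma identifies the faces of $\CaI{\I}{\A}$ with those of $\Mink{\I}{\A}$, namely $\U$ is a face of $\CaI{\I}{\A}$ iff $\Mink{\U}{\A}$ is a face of $\Mink{\I}{\A}$. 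Concatenating these two equivalences yields the desired statement.

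Before composing the two, I would verify the hypothesis of Lemma~\ref{lem:gale} for $\CaI{\I}{\A}$: the assumption that $\Mink{\I}{\A}$ is full-dimensional in $\R^d$ implies that $\CaI{\I}{\A}$ is full-dimensional in $\R^\s \times \R^d$, since the projection of $\conv \CaI{\I}{\A}$ onto the first $\s$ coordinates covers the simplex $\conv(e_0, \dots, e_\s)$ while the fiber over the barycenter is (a translate of) $\conv \Mink{\I}{\A}$. Consequently the Gale transform lives in a space of dimension $|\I| - (\s + d) - 1 = |\I| - d - \s - 1$, matching the stated domain of $\G$.

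The only real content beyond bookkeeping is making sure the hypothesis $\U_i \neq \emptyset$ for all $i$ is precisely what is needed to apply the Cayley--Minkowski correspondence in both directions: faces of $\CaI{\I}{\A}$ that meet every color class correspond exactly to Minkowski-sum faces $F_0 + \cdots + F_\s$ with each $F_i \neq \emptyset$, and every face of $\Mink{\I}{\A}$ is of this form. I expect the main subtlety to be this boundary case (faces of the Cayley polytope that miss some $\I_i$ correspond to faces at infinity from the Minkowski viewpoint and fall outside the proposition), which is why the hypothesis is needed. Once this is checked, the proof is a direct composition, which is why the authors elected to omit the details.
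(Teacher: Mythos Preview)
Your proposal is correct and matches the reasoning the paper has in mind: the authors explicitly say the dictionary ``follows the same reasoning as for classical Gale duality'' and omit the proof, and your two-step composition of Lemma~\ref{lem:gale} applied to $\CaI{\I}{\A}$ with the Cayley--Minkowski face correspondence is exactly that reasoning. Your check that full-dimensionality of $\Mink{\I}{\A}$ in $\R^d$ implies full-dimensionality of $\CaI{\I}{\A}$ in $\R^{\s+d}$ is the only nontrivial verification needed, and you handle it correctly.
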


Note that the colorful Gale transform actually encodes also
the faces of all \emph{subsums} of $\A_0,\dots,\A_\s$, which are in bijection to
the faces of $ \CaI{\I}{\A}$.
Indeed, for $\U\subseteq \I$ let $I\subset\{0,\dots,\s\}$ be the set of indices 
such that $\U_i=\emptyset$ and set $\I_I=\bigcup_{i\in I}\I_i$ and $\U_I=\bigcup_{i\in I}\U_i$. Then $\U_I$ is a face of the Minkowski sum $\Mink{\I_I}{\A}$ if and only if $0 \in \relint \G( \I \setminus \U )$.

In the next section we will see another way to associate a colorful
configuration to a Minkowski sum. In contrast to the colorful Gale transform,
that colorful configuration is realized in lower dimensions but it will only
remember faces of the full Minkowski sums and, for example, will not contain
the face structure of the summands. 

The following result, which is obtained from the duality between deletions and 
contractions of Gale transforms~\cite[Section~6.3(d)]{ziegler}, gives a glimpse at this connection by showing how to 
recover the Gale transform of each of the summands from the colorful Gale transform.

\begin{prop}\label{prop:simplex_characterization}
    For $\I = \I_0 \cup \cdots \cup \I_\s$, let $\A : \I \rightarrow \R^d$ be
    a configuration and let $\G : \I \to \R^{|\I|-d-s-1}$ be an associated
    colorful Gale transform. For $0 \le i \le \s$, a Gale transform of $\A_i
    : \I_i \to \R^d$ is given by the projection $\G_i : \I_i \to
    \R^{|\I|-d-s-1} / L$, where $L$ is the linear span of $\G(\I \setminus
    \I_i)$.

    In particular, the points $\A_i$ are affinely independent if and only if
    $\G(\I \setminus \I_i)$ is spanning.
\end{prop}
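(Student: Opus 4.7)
The plan is to deduce the proposition from the standard duality between deletion and contraction for Gale transforms, as found in~\cite[Section~6.3(d)]{ziegler}. Explicitly: if $V:\I\to\R^D$ is a full-dimensional configuration with Gale transform $H:\I\to\R^{|\I|-D-1}$ and $S\subseteq\I$, then a Gale transform of the restriction $V|_{\I\setminus S}$ is obtained by composing $H|_{\I\setminus S}$ with the linear projection $\R^{|\I|-D-1}\twoheadrightarrow \R^{|\I|-D-1}/\mathrm{span}(H(S))$. In matroid terms, this is the familiar identity $(M\setminus S)^{*}=M^{*}/S$ translated into vector configurations.

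First I would observe that $\CaI{\I}{\A}|_{\I_i}$ is affinely isomorphic to $\A_i$. Indeed it sends $v\in\I_i$ to $(e_i,\A(v))$, whose image lies in $\{e_i\}\times\R^d\subset\R^s\times\R^d$; projecting onto the second factor identifies this configuration with $\A_i$. Since Gale transforms depend only on the intrinsic affine dependences among the points, a Gale transform of $\CaI{\I}{\A}|_{\I_i}$ is also a Gale transform of $\A_i$.

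Next I would apply the deletion--contraction duality to $V=\CaI{\I}{\A}$ (with $D=s+d$), Gale transform $H=\G$, and subset $S=\I\setminus\I_i$. The duality then yields that a Gale transform of $\CaI{\I}{\A}|_{\I_i}$, hence of $\A_i$, is given by $\G|_{\I_i}$ followed by the projection modulo $L=\mathrm{span}(\G(\I\setminus\I_i))$. This is precisely the map $\G_i$, which establishes the first assertion.

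For the \emph{in particular} clause, recall that a configuration is affinely independent exactly when its Gale transform is trivial, i.e. takes values in the zero vector space. Via the characterisation of $\G_i$ just obtained, this is equivalent to $\R^{|\I|-d-s-1}/L=0$, which in turn is equivalent to $L=\R^{|\I|-d-s-1}$, namely $\G(\I\setminus\I_i)$ spanning the ambient space. The only point requiring a bit of care is that $\A_i$ need not be affinely spanning in $\R^d$: its Gale transform intrinsically lives in dimension $|\I_i|-\dim(\aff\A_i)-1$, and the deletion--contraction duality automatically matches this with the codimension of $L$, so no case distinction is needed.
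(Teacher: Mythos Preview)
Your proposal is correct and follows exactly the route the paper indicates: the paper does not give a proof but simply states that the result ``is obtained from the duality between deletions and contractions of Gale transforms~\cite[Section~6.3(d)]{ziegler},'' which is precisely what you have unpacked. Your observation that the restriction of the Cayley embedding to $\I_i$ is affinely isomorphic to $\A_i$ is the one extra sentence needed to make that citation into a proof, and your handling of the non-spanning case in the final paragraph is appropriate.
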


Notice that this last property is always fulfilled by colorful configurations
in relative general position.  Indeed, if $\G(\I\setminus\I_i)$ was not full
dimensional, we could apply the Colorful Carath\'eodory Theorem to find a
colorful ridge containing the origin.

If for all $0 \le i \le \s$ the points of $\A_i$ are affinely independent,
then $P_i := \conv \A(\I_i)$ is a simplex and $\conv \Mink{}{\A} = P_1 +
\cdots + P_\s$ is a Minkowski sum of simplices. 

\begin{cor}\label{cor:dict_genericcentered}
    Let $\I = \I_0 \cup \cdots \cup \I_d$ with $n_i = |\I_i| \ge 2$ for all $0
    \le i \le d$ and set $n = n_0 + \cdots + n_d$.    Then every centered
    colorful configuration of $\G : \I \rightarrow \R^d$ in relative general
    position is the colorful Gale transform of a configuration $\A : \I \to
    \R^{n - 2d -1}$ such that $\Mink{}{\A}$ is full-dimensional and $\A_i$ is
    affinely independent for all $0 \le i \le d$.
\end{cor}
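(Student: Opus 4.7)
The corollary contains three assertions: (i)~existence of a configuration $\A : \I \to \R^{n-2d-1}$ whose colorful Gale transform is~$\G$; (ii)~full-dimensionality of the Minkowski sum $\Mink{}{\A}$; and (iii)~affine independence of each $\A_i$. The idea is to derive (i) and (ii) from the converse direction of Lemma~\ref{lem:Galedualcharacterisation} together with the slicing description of the Cayley polytope recalled in Section~\ref{sec:cayley}, and to obtain (iii) from Proposition~\ref{prop:simplex_characterization} together with the relative general position hypothesis.

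The first step is to verify that $\G$ is full-dimensional in $\R^d$, since the converse direction of Lemma~\ref{lem:Galedualcharacterisation} requires this. Since $\G$ is centered, the Colorful Carath\'eodory Theorem~\ref{thm:CC} produces at least one colorful simplex of $\G$ whose convex hull contains the origin. Relative general position forbids any such simplex of dimension strictly less than~$d$, so this hitting simplex has $d+1$ affinely independent vertices and in particular $\aff \G(\I) = \R^d$.

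Next, applying the converse of Lemma~\ref{lem:Galedualcharacterisation} with $s = d$ yields a full-dimensional Cayley embedding $\CaI{\I}{\A} : \I \to \R^{d} \times \R^{d'}$ whose Gale transform is positively equivalent to~$\G$; the dimension equation $|\I| - (s + d') - 1 = d$ forces $d' = n - 2d - 1$. Replacing the Gale transform by its positive rescaling does not alter any convex-geometric property used in this paper, in particular it does not change whether $0 \in \relint \G(\U)$ for a subset $\U \subseteq \I$, so $\G$ may be identified with the colorful Gale transform of~$\A$ on the nose. For (ii), the slice identity $\conv \CaI{\I}{\A} \cap \Lambda \cong \conv \Mink{}{\A}$ recalled in Section~\ref{sec:cayley}, combined with the fact that the projection of $\CaI{\I}{\A}$ onto its first $d$ coordinates has $d$-dimensional image, shows $\dim \conv \CaI{\I}{\A} = d + \dim \conv \Mink{}{\A}$. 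Since $\CaI{\I}{\A}$ is full-dimensional in $\R^{d + d'}$ by construction, this forces $\Mink{}{\A}$ to be full-dimensional in~$\R^{d'}$.

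Finally, (iii) is obtained from Proposition~\ref{prop:simplex_characterization}, which reduces affine independence of $\A_i$ to the statement that $\G(\I \setminus \I_i)$ linearly spans~$\R^d$. If it did not, the $d$ remaining centered color classes $\G(\I_j)$, $j \neq i$, would all lie in a proper linear subspace of $\R^d$, and applying the Colorful Carath\'eodory Theorem inside that subspace would yield a colorful simplex of $\G$ of dimension at most $d-1$ containing the origin, contradicting relative general position. This is exactly the remark recorded immediately after Proposition~\ref{prop:simplex_characterization}. The only mildly delicate point is the `positively equivalent' versus `equal' bookkeeping when invoking Lemma~\ref{lem:Galedualcharacterisation}, but all the relevant properties are invariant under positive rescaling, so this causes no difficulty.
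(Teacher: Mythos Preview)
Your proof is correct and follows exactly the implicit argument the paper intends: the corollary is stated without proof because it is meant to be read off directly from Lemma~\ref{lem:Galedualcharacterisation}, Proposition~\ref{prop:simplex_characterization}, and the remark immediately following it. You have simply filled in the details (full-dimensionality of $\G$, the dimension count $d' = n-2d-1$, full-dimensionality of $\Mink{}{\A}$ via the Cayley slice, and the spanning argument for $\G(\I\setminus\I_i)$), all of which match the paper's reasoning.
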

A converse statement also holds, for the adequate notion of \emph{general
position} of Minkowski sums of simplices. For example, if all the simplices
are in \emph{relative general position} in the sense of~\cite{AS2016}, then
their colorful Gale dual is in relative general position. We do not elaborate
further because we will only need Corollary~\ref{cor:dict_genericcentered} in
this direction.

We are interested in the face structure of $\Mink\I\A$.  We will call a face
$\U=\U_0\cup\cdots\cup\U_\s$ \Defn{mixed} if $\dim(A(\U_i))\geq 1$ for all
$1\leq i\leq \s$. A facet is \Defn{totally mixed} if $\U_i$ is a facet of
$\A_i$ for all $0\le i\le \s$.

Proposition~\ref{prop:dict} allows to easily translate many properties between
the two setups. In particular information on facets. Notice that, since facets
of $\A$ are inclusion-maximal faces, the corresponding sub-configuration of
$\G$ is an inclusion minimal subset containing the origin in its convex hull.
In particular, facets of $\Mink\I\A$ correspond to subsets
$\cS=\cS_0\cup\cdots\cup \cS_\s$ with $\cS_i\varsubsetneq \I_i$ such that
$\G(\cS)$ is affinely independent and $0\in \relint(\G(\cS))$.

\begin{lem}\label{lem:char_tmf}
Let $\I = \I_0 \cup \cdots \cup \I_d$ with $n_i = |\I_i| \ge 2$, and
let $\G:\I \rightarrow \R^d$ be a centered colorful configuration 
that is the colorful Gale transform of $\A : \I \to \R^{n - 2d -1}$. Let $\sigma=\{v_0,\dots,v_s\}$ be a hitting colorful simplex of $\G$ such that $0\in\Int\G(\sigma)$. Then $\bigcup \I_i\setminus\{v_i\}$ is a totally mixed facet of $\Mink\I\A$.

If moreover each of the $\A_i$ is affinely independent for
all $0 \le i \le d$, the converse holds: each totally mixed facet is of $\Mink\I\A$ is of the form
$\bigcup \I_i\setminus\{v_i\}$ for a hitting colorful simplex $\sigma=\{v_0,\dots,v_s\}$ with
$0\in\Int\G(\sigma)$.
\end{lem}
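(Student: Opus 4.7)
My plan is to apply the colorful Gale dictionary of Proposition~\ref{prop:dict} together with the per-summand Gale description of Proposition~\ref{prop:simplex_characterization}. Throughout set $\U_i := \I_i \setminus \{v_i\}$ and $\U := \bigcup_{i=0}^d \U_i$, so that $\I \setminus \U = \sigma$, and note that $\U_i \neq \emptyset$ for all $i$ because $n_i \ge 2$, which makes Proposition~\ref{prop:dict} applicable to~$\U$.

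For the forward direction, $0 \in \Int \G(\sigma) \subseteq \relint \G(\sigma)$ already yields via Proposition~\ref{prop:dict} that $\Mink{\U}{\A}$ is a face of $\Mink{\I}{\A}$. To promote it to a facet I exploit the fact that $0 \in \Int \G(\sigma)$ forces the $d+1$ points $\G(v_0),\dots,\G(v_d) \in \R^d$ to be affinely independent and to admit a strictly positive linear dependence $\sum_{j=0}^d \lambda_j \G(v_j) = 0$ with $\lambda_j > 0$ for all $j$. Affine independence implies that for any $\sigma' \subsetneq \sigma$ the convex hull $\conv \G(\sigma')$ is a proper face of the simplex $\conv \G(\sigma)$ and hence does not contain $0$; so $\sigma$ is inclusion-minimal among subsets whose $\G$-image contains $0$ in its relative interior, and by Proposition~\ref{prop:dict} this makes $\U$ inclusion-maximal, i.e.\ a facet. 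For total mixedness I apply Proposition~\ref{prop:simplex_characterization}: a Gale transform $\G_i$ of $\A_i$ is the projection of $\G$ restricted to $\I_i$ modulo $L_i$, the linear span of $\G(\I \setminus \I_i)$. The positive dependence expresses $\G(v_i)$ as a linear combination of the $\G(v_j)$ with $j \neq i$, so $\G(v_i) \in L_i$ and $\G_i(v_i) = 0$. Applying Lemma~\ref{lem:gale} to $\A_i$ then exhibits $\U_i$ as a face of $\A_i$, and it is a facet because $\I_i \setminus \U_i = \{v_i\}$ is trivially inclusion-minimal.

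For the converse, assume each $\A_i$ is affinely independent and let $F = F_0 + \cdots + F_d$ be a totally mixed facet of $\Mink{\I}{\A}$. Since each $P_i$ is an $(n_i-1)$-simplex, the facet $F_i$ is obtained by deleting a single vertex $v_i$, giving $\U_i = \I_i \setminus \{v_i\}$ and a colorful simplex $\sigma = \{v_0,\dots,v_d\}$. Proposition~\ref{prop:dict} yields $0 \in \relint \G(\sigma)$, so $\sigma$ is hitting. The main obstacle is upgrading $\relint$ to $\Int$, and this is the only nontrivial step: I would argue by contradiction. If $\G(\sigma)$ were affinely dependent, I could combine the strictly positive convex combination witnessing $0 \in \relint \G(\sigma)$ with an affine dependence relation among $\G(\sigma)$, pushing one coefficient down to zero while keeping the others positive, and obtain $\sigma' \subsetneq \sigma$ with $0 \in \relint \G(\sigma')$. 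Since every $\sigma'_i$ is still a proper subset of $\I_i$, Proposition~\ref{prop:dict} then produces a face of $\Mink{\I}{\A}$ strictly containing $\U$, contradicting that $\U$ is a facet. Hence $\G(\sigma)$ is affinely independent; as it consists of $d+1$ points in $\R^d$ it must span, making $\conv \G(\sigma)$ a full-dimensional $d$-simplex with $0$ in its topological interior.
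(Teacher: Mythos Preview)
Your proof is correct and follows precisely the route the paper sketches in the discussion surrounding the lemma: the paragraph preceding Lemma~\ref{lem:char_tmf} records that facets of $\Mink\I\A$ correspond, via Proposition~\ref{prop:dict}, to inclusion-minimal complements~$\cS$ with $0\in\relint\G(\cS)$, equivalently to affinely independent $\G(\cS)$ with $0$ in the relative interior; and the remark after the lemma notes that the converse reduces to each $\A(\I_i)$ being a pyramid over $\A(\U_i)$, which is automatic for simplices. Your argument makes these two observations explicit, and your use of Proposition~\ref{prop:simplex_characterization} to certify that each $\U_i$ is a facet of $\A_i$ in the forward direction (without assuming the $\A_i$ are simplices) is a clean addition.

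One small imprecision: in the contradiction step of the converse you write ``pushing one coefficient down to zero while keeping the others positive''. In general several coefficients may vanish simultaneously, so you only get $0\in\conv\G(\sigma')$ with $\sigma'\subsetneq\sigma$; one then passes to the face of $\conv\G(\sigma')$ containing $0$ in its relative interior to obtain $0\in\relint\G(\sigma'')$ for some nonempty $\sigma''\subsetneq\sigma$. This does not affect the conclusion.
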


Observe that the reciprocal does not hold in the general case, since
such a totally mixed facet  $\Mink\U\A$ exists if and only if each $\A(\I_i)$
is a pyramid with basis $\A(\U_i)$. However, the case in which we are most
interested in is the Minkowski sum of simplices, in which every facet is a
pyramidal base.

In particular, in light of Corollary~\ref{cor:dict_genericcentered}, centered
colorful $\n$-configurations in relative general position can be interpreted
as the Cayley embedding of a collection of simplices $A_0,\dots,A_{d}$ in
$\R^D$, where $D=1+\sum (\dim(A_i)-1)$.  The dimensions are not arbitrary: The
only possibility that a generic collection of polytopes in $\R^D$ can have a
totally mixed facet is that $D=1+\sum (\dim(A_i)-1)$ (and then such a facet is
isomorphic to a product of facets of each $A_i$).

Then Theorem~\ref{thm:simplices} is just a result of translating Theorem~\ref{thm:CC_upper} to Minkowski sums.

\begin{proof}[Proof of Theorem~\ref{thm:simplices}]
    Let $\G$ be the colorful Gale transform of $\A$. By
    Lemma~\ref{lem:char_tmf}, the number of totally mixed facets of
    $\Mink\I\A$ is the number of hitting simplices of $\G$ that contain the
    origin in its relative interior. Since this is an open condition, the
    maximum is is clearly attained by a colorful configuration in general
    position, and the Corollary follows directly from
    Theorem~\ref{thm:CC_upper}.
\end{proof}

\section{Minkowski sums and generalized Gale transforms}
\label{sec:Ptransform}
\newcommand\FL{\mathcal{I}}%
\newcommand\F{\mathcal{F}}%

In this final section we wish to outline an alternative way to associate a
colorful configuration to a Minkowski sums. This is by way of
\emph{generalized} Gale transforms that were first described by
McMullen~\cite{McMullen1979}. He presented several applications in the study
of zonotopes and centrally symmetric polytopes, and one purpose of this
section is to show new applications of this construction.  This is best done
for polytopes and this section differs from the previous ones in that we focus
on polytopes and their vertex sets.

Let $P \subset \R^d$ be a full-dimensional polytope. We are interested in the
facial structure of $\pi(P)$, where $\pi : \R^d \rightarrow \R^e$ is a linear
projection with $e \le d$. We may assume that $0$ is in the interior of $P$
and thus there are linear forms $\ell_0,\dots,\ell_m : \R^d \to \R$ such that
\[
    P \ = \ \{ x \in \R^d : \ell(x) \le 1 \text{ for all } i = 0,\dots,m \}.
\]

For a face $F \subseteq P$, let $I(F) = \{ i \in [m] : \ell_i(x) = 1 \text{
for all } x \in F \}$, where $[m]=\{0,\dots,m\}$, and set 
\[
    \FL \ = \ \FL(P) \ := \ \{ I(F) : F \subseteq P \text{ face} \}.
\]
Since $I(F) \neq I(F')$ if and only if $F \neq F'$, $\FL$ ordered by inclusion
is isomorphic to (the opposite of) the face lattice of $P$ and we consider
this to be the \emph{combinatorial information} obtained from $P$. 

Now, if $\pi : \R^d \rightarrow \R^e$ is a linear projection, then its adjoint
is an injection $\pi^* : (\R^e)^\ast \hookrightarrow (\R^d)^*$ and let $L
\subseteq (\R^d)^\ast$ be its image. Finally, let $\phi : (\R^d)^\ast
\twoheadrightarrow (\R^d)^\ast / L \cong \R^{d-e}$ be the canonical
projection. We define the \Defn{$P$-transform} of $\pi$ as the point
configuration $\G_P : [m] \rightarrow \R^{d-e}$ by $G_P(i) = \phi(\ell_i)$ for
$i = 0,\dots,m$. This, in a strong way, depends on the \emph{geometry} of $P$.
The following lemma is the analog of Lemma~\ref{lem:gale} that has been
discovered in different contexts. See~\cite{Ziegler2004,SanyalZiegler2010} for
strengthenings.

\begin{lem}\label{lem:Pgale}
    Let $P \subset \R^d$ be a full-dimensional polytope. For $\pi : \R^d
    \rightarrow \R^e$, let $\G_P$ be the associated $P$-transform. For a
    proper face $F \subset P$ the following are equivalent
    \begin{enumerate}[\rm (i)]
        \item $F' = \pi(F)$    is a proper face of $\pi(P)$ and $\pi^{-1}(F')
            \cap P = F$,
        \item $ 0 \in \relint \conv \G_P(I(F)).$
    \end{enumerate}
\end{lem}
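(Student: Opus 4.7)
The plan is to reduce both (i) and (ii) to a common intermediate condition, namely that the subspace $L \subseteq (\R^d)^\ast$ meets the relative interior of the normal cone of $F$ at $P$, which under our normalization is the cone $\cone(\ell_i : i \in I(F))$.

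First I would record the polar picture: since $0 \in \Int P$ and each facet inequality is normalized to $\ell_i(x) \le 1$, a linear functional $\ell \in (\R^d)^\ast$ attains its maximum on $P$ on a face containing $F$ precisely when $\ell \in \cone(\ell_i : i \in I(F))$, and attains it exactly on $F$ precisely when $\ell \in \relint \cone(\ell_i : i \in I(F))$. This is just the standard description of normal cones read off the $\ell_i$; equivalently, it follows from the fact that the polar $P^\circ = \conv(\ell_0,\dots,\ell_m)$ has $\conv(\ell_i : i \in I(F))$ as the face dual to $F$.

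Next I would translate condition (i) into this cone language. A proper face $F' \subseteq \pi(P)$ is of the form $\{y \in \pi(P) : \lambda(y) = \max_{\pi(P)}\lambda\}$ for some nonzero $\lambda \in (\R^e)^\ast$, and since $\pi^\ast(\lambda)(x) = \lambda(\pi(x))$, the preimage $\pi^{-1}(F') \cap P$ is the face of $P$ on which $\pi^\ast(\lambda)$ is maximized. Hence (i) holds if and only if some $\lambda \in (\R^e)^\ast$ has $\pi^\ast(\lambda) \in \relint \cone(\ell_i : i \in I(F))$, i.e.\ if and only if
\[
    L \cap \relint \cone(\ell_i : i \in I(F)) \ \neq \ \emptyset.
\]
The properness of $F$ automatically gives nonzero $\pi^\ast(\lambda)$, and conversely any nonzero element in this intersection is the image under $\pi^\ast$ of a nonzero $\lambda$.

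Finally I would apply the projection $\phi : (\R^d)^\ast \twoheadrightarrow (\R^d)^\ast / L$. Because $L = \ker\phi$, the intersection $L \cap \relint \cone(\ell_i : i \in I(F))$ is nonempty iff $0 \in \phi\bigl(\relint \cone(\ell_i : i \in I(F))\bigr) = \relint \cone(G_P(i) : i \in I(F))$; the last equality uses that linear maps send relative interiors to relative interiors of the image cone. A positive combination $\sum_{i \in I(F)} \mu_i G_P(i) = 0$ with all $\mu_i > 0$ can be rescaled to have $\sum \mu_i = 1$, so this is equivalent to $0 \in \relint \conv(G_P(i) : i \in I(F))$, which is (ii).

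The only genuine subtlety is the bookkeeping around \emph{relative interior} in all three translations: one must verify that the cone $\cone(\ell_i : i \in I(F))$ really does carve out exactly the functionals maximized on $F$ (not merely on some face containing $F$), and that $\phi$ preserves relative interiors of finitely generated cones. Both are standard, so I do not expect any serious obstacle beyond writing out the normal-cone identification carefully.
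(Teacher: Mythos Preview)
Your proof is correct and follows essentially the same approach as the paper: both arguments hinge on identifying the normal cone of $F$ with $\cone(\ell_i : i \in I(F))$, pulling back faces of $\pi(P)$ via $\pi^\ast$ to get elements of $L$, and then pushing forward through $\phi$ to land at the origin in the $P$-transform. The only difference is organizational---you isolate the intermediate condition ``$L$ meets $\relint\cone(\ell_i : i \in I(F))$'' explicitly, whereas the paper moves directly from a maximizing functional $\ell' \in (\R^e)^\ast$ to the positive combination $\ell = \sum_{i \in I(F)}\lambda_i\ell_i$ and applies $\phi$---but the content is the same.
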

\begin{proof}
    Let $\ell' \in (\R^e)^\ast$ be a linear function such that $\ell'$ is
    maximized on $F'$ over $\pi(P)$. This is the case if $\ell =
    \pi^\ast(\ell')$ is maximized on $F$ over $P$. That is, if there are
    scalars $\lambda_i \ge 0$ for $i \in I(F)$ such that 
    \begin{equation}\label{eq:genGaledual}
      \ell = \sum_{i \in I(F)} \lambda_i \ell_i.
    \end{equation}
    Since $\ell$ is in the image of $\pi^\ast$, it follows that $\phi(\ell) =
    0$ and hence, after applying $\phi$ to \eqref{eq:genGaledual} and scaling
    by ${1}\slash{\sum{\lambda_i}}$ we get that $0 \in \conv \G_P(I(F))$. If
    there is no face $\wh F \supset F$ such that $\pi(\wh F) = F'$, then all
    the $\lambda_i$ must be strictly positive, and hence the origin  is in the
    relative interior of $\conv \G_P(I(F))$.

    For the converse, note that if there are $\lambda_i > 0$ for $i \in
    I(F)$ such that $\ell = \sum_{i \in I(F)} \lambda_i \ell_i$, then $F$ is
    the inclusion-maximal face that maximizes $\ell$. Moreover, if 
    $\ell$ is in the kernel of $\phi$, then, by definition, it is in the image
    of $\pi^\ast$ and hence $\ell = \pi^\ast(\ell')$ for some $\ell' \in
    (\R^e)^\ast$. That is, $\pi(F)$ maximizes $\ell'$ over $\pi(P)$.
\end{proof}

The following example shows the connection to usual Gale transforms.

\begin{example}\label{ex:simplexPtransform}
    For $n \geq  1$ fixed, let $\Delta \subset \R^{n-1}$ be the
    $(n-1)$-dimensional simplex with vertices $v_1,v_1,\dots,v_n$ $v_n= - (v_1
    + \dots + v_{n-1})$.  Its $n$ facet-defining linear inequalities
    $\ell_i(x) \le 1$ satisfy $\ell_i(v_j) = 1$ if $i \neq j$ and $<1$
    otherwise. If we write $F_J = \conv(v_i : i \in J )$ for the face
    corresponding to a subset $J \subseteq [n]$, then the convention above
    yields $I(F_J) = [n] \setminus J$.

    For a given polytope $Q = \conv(q_1,\dots,q_n) \subset \R^d$ on $n$
    vertices whose barycenter is the origin, there is a canonical surjective
    linear projection $\pi : \R^{n-1} \to \R^d$ that takes $v_i$ to $q_i$ for
    all $1 \le i \le n$. The corresponding $\Delta$-transform is then a point
    configuration $\G_{\Delta} : [n] \rightarrow (\R^{n-1})^\ast / \im(\pi^\ast) \cong
    \R^{n-1-d}$ and Lemma~\ref{lem:Pgale} yields that for any $J \subseteq
    [n]$ we have that
    \[
        \pi(F_J) \ = \ \conv(q_i : i \in J) \ \subseteq \ Q
    \]
    is a face of $Q$ if and only if $0 \in \relint \conv( \G_{\Delta}(i) : i
    \in [n] \setminus J )$.
\end{example}

This is exactly the statement of Lemma~\ref{lem:gale}.  Actually, for a
careful choice of the simplex $\Delta$, ordinary Gale
transforms are exactly $\Delta$-transforms.  In general $P$-transforms are a
very powerful tool to study polytopes under projections; cf.~\cite{RS12}. In
particular, the (well-)known \emph{cs transforms} for centrally-symmetric
polytopes and \emph{zonal transforms} for zonotopes are $P$-transforms for
crosspolytopes and cubes, respectively~\cite{McMullen1979}.

Of course, we may also fix the projection $\pi : \R^d \to \R^e $ and vary the
polytope $P \subset \R^d$. This approach directly allows us to associate a
colorful configuration to a Minkowski sum. For fixed $\s$ and $d$, the
\Defn{Minkowski projection} is the linear map $\mu : (\R^d)^{\s+1} \to \R^d$
given by
\[
    (x_0,\dots,x_\s) \ \mapsto \ x_0 + \cdots + x_\s.
\]
For polytopes $P_0,\dots,P_\s \subset \R^d$ it is obvious that $\mu(P_0 \times
\cdots \times P_\s) = P_0 + \cdots + P_d$. If $P_i$ has facet-defining linear
forms indexed by $\F_i$, then the facet-defining linear forms of $P = P_0 +
\cdots + P_\s$ are indexed by $\F = \F_0 \cup \cdots \cup \F_\s$. The
\Defn{Minkowski transform} $M : \F \to \R^{(\s-1)d}$ of $P_0,\dots,P_\s$ is
then the $(P_0 \times \cdots \times P_\s)$-transform for the Minkowski projection $\mu$. 
This is a colorful configuration. It is also centered, because the normal vectors 
to the facets of each polytope are positively spanning in their linear span.
For faces $F_j \subseteq P_j$, $0 \le j \le \s$, write $I(F_j)
\subseteq \F_j$.  Then Lemma~\ref{lem:Pgale} yields that $F_0 +  \cdots + F_\s$
is a proper face of $P_0 + \cdots + P_\s$ if and only if 
\[
    0 \in \relint \conv M(I_0(F_0) \cup \cdots \cup I_\s(F_\s)).
\] 

Note that $M$ is a colorful configuration in $(s+1)d$-dimensional space and
thus typically of much lower dimension than the colorful Gale transform of
Section~\ref{sec:cayley}. However, as we pointed out in
Section~\ref{sec:cayley}, the colorful Gale transform also contains
information about subsums $\sum_{i \in I} P_i$ for any $I\subseteq[\s]$, and in particular, of each of the individual summands. This is not true any more
for the Minkowski transform $M$. In general, the number of points in the two transforms 
does not coincide either: The Minkowski transform is indexed by facets and the colorful
Gale transform by vertices.

Yet, these two approaches are strongly related.  Consider the case where $P_i
\subset \R^d$ is an $(n_i-1)$-dimensional simplex for all $0 \le i \le \s$.
If $\I_i$ and $\F_i$ index the facets and the vertices of $P_i$, respectively,
then $|\I_i|=|\F_i| = n_i$ and we can identify both by matching each facet
with the only vertex it does not contain.  Then, as in
Example~\ref{ex:simplexPtransform}, for $J_i\subset\I_i$ the face
$F_{J_i}=\conv(q_j\ \colon\ j\in J_i)$ belongs to the facets indexed by
$I(F_j)=\F_i\setminus J_i$.

The Minkowski transform $M : \F \rightarrow \R^{n - s - d - 1}$, where $n =
n_0 + \cdots + n_\s$ is a centered colorful configuration, because the normal
vectors to the facets of each polytope are positively spanning in their linear
span.  Lemma~\ref{lem:Pgale} yields that for any collection $J_i \subseteq
\I_i$ for $0 \le i \le \s$, the sum $F_{J_0} + \cdots + F_{J_\s}$ is a face of
$P_0 + \cdots + P_\s$ if and only if
\[
    0 \ \in \ \conv( M(\F_0\setminus J_0) \cup \cdots \cup M(\F_\s\setminus
    J_\s) ).
\]
Notice how this is exactly the condition of Proposition~\ref{prop:dict}.  This
shows that the Minkowski transformation $M$ and the colorful Gale transform
$\G$ of $P_0,\dots,P_\s$ are both colorful configurations in $(n - s - d -
1)$-dimensional space with the same combinatorics. A careful computation shows
that these actually coincide.

\begin{prop}
    For any collection of simplices $P_0,\dots,P_\s \subset \R^d$, the
    Minkowski transformation and the colorful Gale transformation coincide up
    to a choice of coordinates.
\end{prop}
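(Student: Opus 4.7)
The plan is to exhibit an explicit linear isomorphism between the ambient spaces of the two transforms and verify that, on the dual side, it carries $\G$ to $M$ up to the per-vertex positive rescaling inherent in the normalization of facet forms. The underlying identification comes from the barycentric duality between vertices and facets of a simplex.

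To set things up, after translating so that $0$ lies in the interior of each $P_i$, let $\beta^i\in\R^{n_i}_{>0}$ be the barycentric coordinates of the origin with respect to the vertices $v^i_1,\dots,v^i_{n_i}$, so that $\sum_k \beta^i_k v^i_k = 0$ and $\sum_k \beta^i_k = 1$. Normalizing the facet form $\ell^i_k$ opposite $v^i_k$ by $\ell^i_k(v^i_m)=1$ for $m\neq k$, the linearity of $\ell^i_k$ forces $\ell^i_k(v^i_k) = 1 - 1/\beta^i_k$, and a short expansion yields the barycentric identity $\ell^i_k(x) = 1 - \lambda^i_k(x)/\beta^i_k$, where $\lambda^i(x)$ are the barycentric coordinates of $x \in \aff(P_i)$. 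I then define $B:\R^{n-\s-1}\to\R^n$ by sending $(x_0,\dots,x_\s)$, viewed as a tangent vector to $\aff(P_0)\oplus\cdots\oplus\aff(P_\s)$ at $0$, to the tuple of tangent barycentric coordinates: $B(x_0,\dots,x_\s)_{(i,k)} := \lambda^i_k(x_i)-\beta^i_k$.

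Reading off the block structure of the augmented Cayley matrix $\wh N$ of the Cayley embedding, its kernel equals $\{(\mu^0,\dots,\mu^\s)\in\R^n : \mathbf{1}^T\mu^i = 0 \text{ for all }i \text{ and } \sum_i V_i\mu^i = 0\}$, where $V_i$ is the matrix of vertices of $P_i$. These conditions precisely cut out the image under $B$ of $\ker\mu = \{(x_0,\dots,x_\s):\sum_i x_i = 0\}$: indeed, $\sum_k \mu^i_k = 0$ is automatic from $\sum_k\lambda^i_k = 1$, while $\sum_i V_i \mu^i = \sum_i x_i$. Since both kernels have dimension $n-\s-d-1$, $B$ restricts to a linear isomorphism $\ker\mu \xrightarrow{\sim} \ker \wh N$.

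Dualizing, for each vertex $v^i_k$ the Gale value $\G(v^i_k) = e^*_{(i,k)}|_{\ker\wh N}$ pulls back under $B^*$ to the functional on $\ker\mu$ sending $(x_0,\dots,x_\s)\mapsto \lambda^i_k(x_i)-\beta^i_k$, which by the barycentric identity equals $-\beta^i_k \cdot \tilde\ell^i_k|_{\ker\mu} = -\beta^i_k M(v^i_k)$. Hence $\G$ and $M$ agree under the linear isomorphism $-(B^*)^{-1}$ of the common target $\R^{n-\s-d-1}$ up to the positive rescalings $\beta^i_k > 0$; these rescalings are precisely the natural ambiguity in normalizing facet forms, equivalently positive equivalence in the sense of Section~\ref{sec:cayley}. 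The main obstacle is the bookkeeping through the dualization between spaces living in different natural coordinate systems; the barycentric identity itself and the explicit descriptions of the two kernels are routine.
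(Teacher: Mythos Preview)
The paper does not actually prove this proposition; the text preceding it shows only that the two transforms live in the same dimension and encode the same face lattice, and then asserts that ``a careful computation shows that these actually coincide.'' Your argument supplies that computation, and it is correct: the map $B$ built from barycentric coordinates gives a clean linear isomorphism $\ker\mu \xrightarrow{\sim}\ker\wh N$, and dualizing yields $B^*\G(v^i_k)=-\beta^i_k\,M(v^i_k)$, exactly as you claim. The barycentric identity $\ell^i_k(x)=1-\lambda^i_k(x)/\beta^i_k$ and the description of $\ker\wh N$ are both routine checks, as you say.

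You are also right to flag that what one actually obtains is \emph{positive equivalence}, not literal coincidence under a single linear isomorphism of the target: the scalars $\beta^i_k$ depend on the vertex and are in general distinct, so there is no global linear $\Phi$ with $\G=\Phi\circ M$. (For instance, take $P_0=[-1,2]$ and $P_1=[-1,1]$ in $\R^1$; both transforms land in $\R^1$ but are not scalar multiples of one another.) Your reading of ``up to a choice of coordinates'' as including the freedom to renormalize the facet forms---equivalently, positive equivalence in the sense of Section~\ref{sec:cayley}---is the correct way to make the statement precise, and this level of agreement is all that is needed for the applications, since the face-lattice information on both sides is invariant under positive rescaling.
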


\bibliographystyle{siam}
\bibliography{ColorfulSimplicialDepth}

\end{document}